\newcommand{\field}[1]{\mathbb{#1}}
\newcommand{\R}{\field{R}}
\newtheorem{theorem}{Theorem}
\newtheorem{cor}{Corollary}
\newtheorem{lemma}{Lemma}
\newtheorem{definition}{Definition}
\newcommand{%
    
    \import{./}{.pdf_tex}
}[1]{%
    
    \import{./}{#1.pdf_tex}
}
\newcommand{\norm}[1]{\left\lVert#1\right\rVert}
\title{Necessary and sufficient conditions for $a$-contraction}
\author{Cooper Faile}
\address{Department of Mathematics, The University of Texas at Austin, Austin, TX 78712.}
\email{jcfaile@utexas.edu}
\date{\today}
\thanks{2010 \textit{Mathematics Subject Classification}. 35B35, 35L45, 35L65, 35L67}
\thanks{\textit{Key words and phrases}. Stability, Conservation law, Relative entropy, Entropy solution, Shock wave, Contraction}
\thanks{\textbf{Acknowledgements}: The author would like to thank his graduate advisor, Alexis Vasseur, for providing helpful conversations and support throughout. This work was partially supported by NSF-DMS Grants 1840314, 2219434, and 2306852.}
\begin{document}
\begin{abstract}
    In this paper we investigate the theory of $a$-contraction with shifts with the intention of extending it to intermediate families. 
    The theory of $a$-contraction with shifts is used to prove orbital $L^2$ stability to shock solutions of conservation laws. 
    In this setting there are strong results for scalar laws and the extremal families of $n\times n$ systems of conservation laws. 
    The only known results showing contraction of interior families are for the contact family the full Euler system \cite{sveuler} and the case of rich systems \cite{serrevasseur16}.
    This investigation culminates in finding necessary and sufficient conditions for which small shocks of general systems are local attractors with respect to the $a$-contraction theory. 
\end{abstract}
\maketitle

\tableofcontents

\section{Introduction}
We consider a 1-d system of conservation laws
\begin{equation} u_t + (f(u))_x = 0 \quad t > 0,\ x\in \R \label{eq:cv}\end{equation}
where $u = (u_1,\dots, u_n):[0,\infty) \times \R \to \mathcal{V}$, $\mathcal{V} \subset \R^n$ is open and connected, and $f \in C^4(\mathcal{V}; \R^n)$. 
We assume our system admits at least one convex entropy-entropy flux pair $(\eta, q)$ satisfying 
\begin{equation} \eta' f' =  q' \label{eq:entropy-def} \end{equation}
 and that our solutions $u$ distributionally satisfy the entropy inequality
\begin{equation} \eta(u)_t + q(u)_x \leq 0. \label{eq:entropy-ineq}\end{equation}
In this paper we examine the $L^2$ stability of shock waves using the $a$-contraction with shift method pioneered by Kang, Leger, and Vasseur in \cite{kang16,leger}.
These works established the orbital stability of shocks for scalar laws and the extremal (the $1^\text{st}$ and $n^\text{th}$ family) shocks of systems against $L^2$ perturbations possessing a certain degree of regularity called the ``strong trace'' property. 
These results have been extended to establish stability BV solutions to scalar laws and small BV solutions to $2\times 2$ systems by Chen, Golding, Krupa, and Vasseur \cite{gkv,ckv,krupascalar} (and recently this methodology has been applied to isothermal gas dynamics giving stability without a small BV condition on the initial data \cite{cheng}.) 
In this current work we are particularly interested in extending these results to the case of discontinuities of interior families for general systems with a single entropy.
In this general setting we only have $a$-contraction in the case of the contact discontinuity of Euler \cite{sveuler}. 
In doing this, we restrict ourselves to small perturbations of small shocks, in contrast with the works mentioned above. 
In this general case the Bressan, Crasta, and Piccoli $L^1$ theory is the state of the art and shows $L^1$ stability of solutions (including shocks of interior families) against small BV perturbations \cite{bressanL1}.

To date, the primary results concerning necessary and sufficient conditions for $a$-contraction are contained in \cite{kang16}.
As in \cite{kang16} we consider solutions $u$ with strong traces, a regularity property weaker than $\text{BV}_{loc}$.
\begin{definition} \label{def:traces}
    Let $u \in L^\infty(\R^+ \times \R;\mathcal{V})$. $u$ verifies the strong trace property if for any Lipschitz curve $h:\R^+ \to \R$ there exists two bounded functions $u_+,u_-:\R^+ \to \mathcal{V}$ such that
    $$ \lim_{n\to 0}\int_0^T \quad\sup_{\mathclap{y \in (0,1/n)}} |u(t, h(t) + y) - u_+(t)| + \sup_{\mathclap{y \in (0,1/n)}} |u(t,h(t) - y) - u_-(t)|\,dt = 0$$
    for any $T > 0$. 
\end{definition} 
The authors then considered stability against large perturbations $u$ of a fixed shock $(u_L, u_R, \sigma_{LR})$ in the space 
$$ \mathcal{S}_{weak} = \{u \in L^\infty(\R^+ \times \R;\mathcal{V}): u\text{ has strong traces}\}.$$
To discuss these results we must first define the relative entropy and other quantities for $a$-contraction. 
First, we note that the cone of entropies for our system contains all positive multiples of our fixed entropy $\eta$ and all linear forms on phase space.
From this, it immediately follows that for any fixed $v \in \mathcal{V}$ the relative entropy,
$$ \eta(u|v) = \eta(u) - \eta(v) - \nabla \eta(v)(u-v), $$
is also in the cone of entropies.
This entropy has the corresponding relative entropy flux
$$ q(u;v) = q(u) - q(v) -\nabla \eta(v)(f(u) - f(v)).$$
These entropy-entropy flux pairs were used in the original relative entropy method of Dafermos \cite{dafermos} and Diperna \cite{diperna} to show weak-strong uniqueness of Lipschitz solutions against distributional solutions to~\eqref{eq:cv}.
The program was moved into weak solutions of scalar conservation laws by Leger by considering the relative stability of shocks up to a shift \cite{legersolo} .
Following \cite{kang16} the theory can be extended to systems by constructing a pseudo distance to a fixed entropic shock $(u_L, u_R, \sigma_{LR})$ using a fixed weight $a > 0$
\begin{equation} E(t) = \int_{-\infty}^{h(t)} a \eta(u|u_L) \,dx + \int_{h(t)}^\infty \eta(u|u_R)\,dx \label{eq:dist}\end{equation}
where $h(t)$ is some Lipschitz curve dependant on our perturbation $u$. 
For any solution $u\in \mathcal{S}_{weak}$ to (\ref{eq:cv}) we have for almost all $t$
\begin{equation} \frac{d}{dt} E(t) \leq \dot h(t)( a\eta(u|u_L) - \eta(u|u_R) ) - (aq(u;u_L) - q(u;u_R)) \label{eq:dist-derivative} \end{equation}
due to the entropy inequality~\eqref{eq:entropy-ineq}. 
Our goal is to show whether there exists an appropriate shift $h(t)$ depending on the perturbation $u$ such that $E(t)$ is non-increasing for all time.
Under this assumption we immediately have that the shocks are orbitally stable in $L^2$ by the lemma
\begin{lemma}[\cite{leger}] 
    For any compact $V \subset\joinrel\subset \mathcal{V}$ there exists $c,\,C > 0$ such that for all $(u,v) \in \mathcal{V}\times V$ 
    \begin{equation} c|u-v|^2 \leq \eta(u|v) \leq C|u-v|^2. \label{lemma-1}\end{equation}
\end{lemma}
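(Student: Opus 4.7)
The plan is to recognize $\eta(u|v)$ as the second-order Taylor remainder of the smooth function $\eta$ expanded at the base point $v$, and then read the desired inequalities off from uniform upper and lower bounds for $D^2\eta$ on a compact set.

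Concretely, the first step is to write the integral form of the remainder:
$$ \eta(u|v) \;=\; \int_0^1 (1-s)\,(u-v)^\top D^2\eta\bigl(v + s(u-v)\bigr)(u-v)\, ds. $$
This identity follows directly from the definition $\eta(u|v) = \eta(u) - \eta(v) - \nabla\eta(v)(u-v)$ together with one application of the fundamental theorem of calculus on $s\mapsto \eta(v+s(u-v))$; no conservation-law structure is needed here, just $\eta \in C^2$ (which is given since we assumed $\eta \in C^4$ via the regularity of $f$ and the entropy pair).

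The second step is to extract uniform bounds on $D^2\eta$. Because $\eta$ is a convex entropy in the sense of \eqref{eq:entropy-def} and appears in the relative entropy machinery, one works under the standing assumption that $\eta$ is strictly convex, so $D^2\eta$ is pointwise symmetric positive definite on $\mathcal{V}$. Given the compact set $V \subset\joinrel\subset \mathcal{V}$, one chooses a slightly larger compact convex neighborhood $K$ with $V \subset K \subset\joinrel\subset \mathcal{V}$ so that for every $v \in V$ and every $u \in K$ the entire segment $v + s(u-v)$, $s\in [0,1]$, remains in $K$. Continuity of $D^2\eta$ on the compact set $K$ together with strict positive definiteness yields constants $c, C > 0$ with $c\, I \preceq D^2\eta(w) \preceq C\, I$ for all $w \in K$. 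Plugging this into the integral representation and computing $\int_0^1 (1-s)\, ds = 1/2$ gives the two-sided bound $\tfrac{c}{2}|u-v|^2 \le \eta(u|v) \le \tfrac{C}{2}|u-v|^2$ after relabeling constants.

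The main obstacle, and the only genuinely delicate point, is the statement as written requires the conclusion for all $u \in \mathcal{V}$ rather than $u$ in a compact set. The upper bound cannot be true globally without an additional hypothesis, since $\eta$ may fail to have bounded Hessian near $\partial \mathcal{V}$; the natural reading is therefore that $u$ ranges over a compact subset of $\mathcal{V}$ (or that one has a fixed enlargement $K$ of $V$ and restricts $u$ to $K$), which is in fact how the lemma is invoked later when $u$ is an $L^\infty$ solution valued in a compact subset of $\mathcal{V}$. If one instead wants the lower bound to extend to all $u \in \mathcal{V}$ at the cost of the upper bound, this follows from strict convexity of $\eta$ together with compactness of $V$ by a standard coercivity argument, splitting into the cases $|u-v| \le \delta$ (use the local Taylor bound above) and $|u-v| > \delta$ (use that $\eta(u|v)/|u-v|^2$ is bounded below by a continuous positive function on $v \in V$, extended by strict convexity when $u$ is far).
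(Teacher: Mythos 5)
Your proof is correct and is the standard argument; the paper gives no proof of its own here (the lemma is quoted from \cite{leger}), and the argument there is exactly this: the integral form of the second-order Taylor remainder plus uniform two-sided bounds on $\nabla^2\eta$ over a compact convex enlargement of $V$. You are also right to flag that, as literally written with $u$ ranging over all of $\mathcal{V}$, the upper bound requires $u$ to be confined to a compact subset of $\mathcal{V}$, which is how the lemma is actually used since the perturbations are $L^\infty$ with values in a fixed compact set. The one inaccuracy is your closing aside: the lower bound likewise does not extend to all $u\in\mathcal{V}$ from strict convexity and compactness of $V$ alone (take $\mathcal{V}$ unbounded and $\eta$ strictly convex but subquadratic at infinity, so $\eta(u|v)/|u-v|^2\to 0$), so that remark should be dropped or supplemented with a hypothesis such as boundedness of $\mathcal{V}$ or a uniform lower bound on $\nabla^2\eta$.
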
 

To simplify notation we define 
\begin{align}
    \tilde \eta(u) &= a\eta(u|u_L) - \eta(u|u_R), \label{eq:til-eta}\\
    \tilde q(u) &= aq(u;u_L) - q(u;u_R), \label{eq:til-q} \\
    \Pi &= \{u| \tilde \eta(u) < 0\}. \label{eq:Pi}
\end{align}
For $a$-contraction we define the continuous and discontinuous dissipations
\begin{align}
    D_{cont}(u) &= -\tilde q(u) + \lambda_i(u) \tilde \eta(u),\label{eq:Dcont}\\
    D_{RH}(u_\pm, \sigma_\pm) &= (q(u_+;u_R) - \sigma_\pm \eta(u_+|u_R)) - a(q(u_-;u_L) - \sigma_\pm \eta(u_-|u_L)) \label{eq:DRH}.
\end{align}
These dissipations are exactly the upper bound in equation~\eqref{eq:dist-derivative} when $h(t)$ is traveling as a generalized characteristic of the $i^\text{th}$ family. 
Kang and Vasseur \cite{kang16} give necessary and sufficient conditions for $a$-contraction for perturbations in the class $\mathcal{S}_{weak}$
which amount to a collection of requirements on the dissipations $D_{cont}$ and $D_{RH}$ depending on the weight $a > 0$. 
This result is reproduced in section \ref{section:prelim}.
\cite{kang16} was able to use their sufficient condition to show extremal families $a$-contract for a large class of systems.
In the case of rich systems Serre and Vasseur were able to show any Liu-Majda stable shock is a local attractor for $a$-contraction by constructing two distinct entropies $\eta_+$, $\eta_-$ depending on the shock, including for intermediate families \cite{serrevasseur16}. 
In this paper the phrase ``local attractor'' should be interpreted as there exists an $\epsilon > 0$ such that the pseudo distance~\eqref{eq:dist} is non-increasing for any perturbation belonging to the smaller class
\begin{equation}\label{eq:local-att}
\begin{aligned}
    \mathcal{S}_{weak}^{\epsilon}(u_L,u_R) = \{u \in L^\infty(\R^+ \times \R;\mathcal{V}): u\text{ has strong traces and there exists}& \\
    \text{a Lipshitz $h$ such that }|u_+(t) - u_R| + |u_-(t) - u_L| \leq \epsilon \text{ for a.e. }t\}& 
\end{aligned}
\end{equation}
where $u_+, u_- \in L^\infty(\R^+; \mathcal{V})$ are the traces along the path $h$ as in Definition \ref{def:traces}. 

We remark that $S_{weak}^{\epsilon}$ is nontrivial for sufficiently small shocks $S$ due to the existence theory of, for instance, \cite{lewicka02} for small BV perturbations of discontinuous solutions.
In this example, we simply let $h$ follow the generalized $i^\text{th}$ characteristic emanating from the large discontinuity in the initial data.
Unlike in the rich case, for general systems we can only assume the existence of one strictly convex entropy $\eta$ limiting our choices to just positive multiples of $\eta$ with affine corrections.
At the moment, only the interior contact family of full Euler is known to $a$-contract with the use of a single entropy by Serre and Vasseur \cite{sveuler}.
Furthermore, Kang and Vasseur showed the 2D isentropic MHD is not stable in this sense by showing for all $a > 0$ the existence of a state $u \in \partial \Pi $ such that $D_{cont}(u) > 0$ \cite{kang16}.
However, this gives no information on states near the shock states, leaving open whether they are local attractors with respect to $a$-contraction.

The primary results of this paper are as follows. 
If our entropy and flux satisfy a pointwise condition then small $i^\text{th}$ shocks are local attractors of $a$-contraction; 
that is, as long as $|u_L - u_R|$ is sufficiently small, we have $a$-contraction against any perturbation in $S^{\epsilon}_{weak}(u_L,u_R)$ for some $\epsilon > 0$ sufficiently small.
\begin{theorem}
    Let the $i^\text{th}$ family be genuinely nonlinear. 
    Suppose at $u_L$ there exists $C \in \R$ such that the matrix
    $$ -C\nabla^2 \eta(u_L)(f'(u_L) - \lambda_i(u_L)I) + r_i(u_L)^t\nabla^2 \eta(u_L)f''(u_L) $$
    is negative definite on the subspace $\text{span}(\{r_k(u_L)\}_{k\ne i}).$
    Then there exists $s_0 > 0$ such that for all $0 < s < s_0$ there exists $\epsilon > 0, C > 0$ such that the shock $(u_L, S^i_{u_L}(s), \sigma(s))$ satisfies
    $$ \frac{d}{dt} E(t) \leq -C |u_-(t)-u_L|^2$$
    for all perturbations $u \in S^{\epsilon}_{weak}(u_L, S^i_{u_L}(s))$ with weight $a(s) = 1 + Cs$ and shift $h$, $u_-$ are the same functions as in~\eqref{eq:local-att}. 
    \label{thm:suf}
  \end{theorem}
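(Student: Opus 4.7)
The plan is to verify the Kang--Vasseur sufficient conditions for $a$-contraction (reproduced in Section \ref{section:prelim}) under the choice of weight $a(s) = 1 + C_0 s$, where $C_0$ is the scalar supplied by the hypothesis, for shock strength $s = |u_R - u_L|$ and perturbation neighborhood size $\epsilon$ both small. By that framework, once the shift $h$ is designed as the appropriate generalized $i$-th characteristic, the derivative \eqref{eq:dist-derivative} reduces to the continuous and Rankine--Hugoniot dissipations $D_{cont}$ and $D_{RH}$ from \eqref{eq:Dcont}--\eqref{eq:DRH}. Thus it suffices to prove a quadratic upper bound of the form $D_{cont}(u_-) \leq -c|u_- - u_L|^2$ together with a matching bound on $D_{RH}(u_\pm,\sigma_\pm)$ for all $(u_-,u_+)$ in a small neighborhood of $(u_L, u_R)$. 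Together with Lemma~1 these inequalities deliver the stated dissipation.

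The workhorse is a Taylor expansion of $D_{cont}$ about $u_L$. Parametrize $u_R = S^i_{u_L}(s)$ with speed $\sigma(s)$, so the Rankine--Hugoniot relations give $u_R - u_L = s\,r_i(u_L) + O(s^2)$ and $\sigma(s) = \lambda_i(u_L) + O(s)$. For $u_-$ near $u_L$ I write $\delta = u_- - u_L$ and decompose $\delta = \alpha_i r_i(u_L) + \delta^\perp$ with $\delta^\perp \in \mathrm{span}\{r_k(u_L)\}_{k \ne i}$. A direct computation using the entropy compatibility \eqref{eq:entropy-def} together with the ansatz $a = 1 + C_0 s$ shows $D_{cont}(u_L) = O(s^3)$ and that $\nabla_u D_{cont}(u_L)$ is $O(s)$; moreover its first-order-in-$s$ coefficient, paired against any $\delta^\perp$, is given exactly by the matrix
\[
M = -C_0 \nabla^2\eta(u_L)(f'(u_L) - \lambda_i(u_L) I) + r_i(u_L)^t \nabla^2\eta(u_L) f''(u_L).
\]
Negative definiteness of $M$ on $\mathrm{span}\{r_k(u_L)\}_{k \ne i}$ then yields, after completing the square in $\delta$ and invoking Lemma~1,
\[
D_{cont}(u_-) \leq -c\,s\,|\delta^\perp|^2 + O(s\,\alpha_i^2) + O(s^3) + O(|\delta|^3).
\]

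The residual $O(s\,\alpha_i^2)$ term in the $r_i$-direction is controlled by genuine nonlinearity: along the Hugoniot curve the classical shock analysis supplies a cubic $-c s^3$ gain in $D_{RH}$, and Taylor-expanding $D_{RH}$ around $(u_L, u_R, \sigma(s))$ shows this gain extends to nearby $(u_-, u_+)$ and absorbs the $\alpha_i^2$ contribution once the shift velocity is set to the generalized $i$-characteristic. Combining this with the transverse estimate above, and observing that the analogous expansion at $u_R$ inherits $M$-type negative definiteness up to $O(s)$ (hence controls $D_{RH}$ via $u_+$ in the same way), produces the claimed $\tfrac{d}{dt} E(t) \leq -C|u_- - u_L|^2$.

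The main obstacle is the algebraic identification of the first-order-in-$s$ term of $\nabla_u D_{cont}(u_L)$, on the subspace transverse to $r_i(u_L)$, with the matrix $M$ from the hypothesis: this requires careful differentiation of the relative entropy and flux, repeated use of \eqref{eq:entropy-def}, and recognition that the ansatz $a = 1 + C_0 s$ is precisely what kills the remaining first-order-in-$s$ obstruction on that subspace. A secondary subtlety is the tight coupling between $D_{cont}$ and $D_{RH}$ in the $r_i$-direction, since the quadratic gain on $\alpha_i^2$ comes entirely from the discontinuous piece; the shift must be chosen so that the two estimates reinforce rather than cancel.
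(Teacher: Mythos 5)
Your overall strategy diverges from the paper's in a way that leaves a real gap. The paper does not verify the Kang--Vasseur conditions $(\mathcal{H}1)$--$(\mathcal{H}2)$ at all (and note that Theorem \ref{thm:kv} as reproduced here is only the \emph{necessity} direction; the sufficiency theorem of \cite{kang16} requires global control on $\partial \Pi$, which is precisely what is unavailable for intermediate families). Instead, the restriction to $S^{\epsilon}_{weak}(u_L,u_R)$ with $\epsilon$ much smaller than $s$ forces the traces $u_\pm(t)$ along $h$ to be genuinely distinct, Lemma \ref{lemma:leger-follow-shock} makes $(u_-,u_+,\dot h)$ a Rankine--Hugoniot discontinuity, and Lemma \ref{lemma:sep} forces it to be of the $i^\text{th}$ family. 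Consequently $\frac{d}{dt}E(t)$ is controlled by $D_{RH}$ alone; $D_{cont}$, the workhorse of your proposal, never actually appears, and there is no freedom in the shift (it must travel at the Rankine--Hugoniot speed $\sigma_\pm$ of the traces). The estimate you actually need --- $D_{RH}(u_-,u_+,\sigma_\pm)\le -C|u_--u_L|^2$ uniformly over \emph{all} nearby $i$-shocks, with $u_+$ ranging freely over a neighborhood of $u_R$ on the Hugoniot curve of $u_-$ --- is the part you dispatch in one sentence (``Taylor-expanding $D_{RH}$ around $(u_L,u_R,\sigma(s))$ shows this gain extends to nearby $(u_-,u_+)$'').

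That step is where the paper's key device lives, and your argument has no substitute for it. The reduction is to the single-variable function $D_{max}(u)=D_{RH}(u,u^+(u),\sigma_\pm)$, where $u^+(u)$ is the maximizer of $s'\mapsto D_{RH}(u,S^i_u(s'),\sigma(s'))$ characterized by \eqref{eq:u-plus-dist}--\eqref{eq:u-plus-on-S}; Lemma \ref{lem:remove-assum-j} guarantees this maximizer exists and is unique in the relevant range, so $D_{RH}(u_-,u_+,\sigma_\pm)\le D_{max}(u_-)$ for every nearby $i$-shock. One then shows $D_{max}(u_L)=0$ and $\nabla D_{max}(u_L)=0$ and computes $\lim_{s\to 0}\nabla^2 D_{max}(u_L)/s$, which equals your matrix $M$ on $\text{span}(\{r_k(u_L)\}_{k\ne i})$ and equals $-\tfrac12 (\nabla\lambda_i(u_L) r_i(u_L))\,[r_i(u_L)^t\nabla^2\eta(u_L) r_i(u_L)]<0$ in the $r_i\otimes r_i$ direction by genuine nonlinearity --- this last term, not a ``cubic gain along the Hugoniot curve,'' is what controls the $r_i$-component of $u_--u_L$. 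Without the passage to $D_{max}$, negative definiteness of $M$ (a Hessian in one variable) does not transfer to the two-variable function $D_{RH}$, and your proposal does not close. Your transverse Hessian computation for $D_{cont}$ may well agree with the paper's for $D_{max}$ on $\text{span}(\{r_k(u_L)\}_{k\ne i})$ (by \eqref{eq:DRH-Dcont} the two Hessians differ at order $s$ essentially by a rank-one term proportional to $l_i\otimes l_i$), but that is not established in your write-up and in any case does not repair the missing reduction.
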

We can construct systems which satisfy this above sufficient condition in neighborhoods of a specific state $u$ for intermediate families.
As a remark, we note that in the extremal case we can always find a $C$ satisfying the hypotheses of Theorem \ref{thm:suf}, as the matrix $f'(U) - \lambda_i(U) I$ is positive (resp. negative) definite on the subspace $\text{span}(\{r_k(u_L)\}_{k\ne i})$ for the extremal family $i = 1$ (resp. $i = n$.)

For contraction to hold for all sufficiently small shocks of size $s$ and weights $a = a(s)$ we have the following necessary condition.
\begin{theorem}
    Suppose there exists $s_0 > 0, u_L \in \mathcal{V}$ such that for all $0 < s < s_0$ the shock $(u_L, S^i_{u_L}(s), \sigma(s))$ is a local attractor with weight $a(s)$,
      where $a\in C^1([0,s_0))$ and $a(0) = 1$. 
      Then the matrix
      $$ -a'(0)\nabla^2 \eta(u_L)(f'(u_L) - \lambda_i(u_L)I) + r_i(u_L)^t\nabla^2 \eta(u_L)f''(u_L) $$
      is negative semidefinite on the subspace $\text{span}(\{r_k(u_L)\}_{k\ne i}).$
    \label{thm:nec}
\end{theorem}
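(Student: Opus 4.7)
The plan is to specialize the necessary conditions for $a$-contraction from \cite{kang16} (recalled in Section~\ref{section:prelim}) to a one-parameter family of small $i$-shock perturbations and then extract the asserted matrix inequality by Taylor-expanding in the shock strength $s$. The key observation is that for each small $s$, local $a$-contraction forces the Rankine--Hugoniot dissipation $D_{RH}(u_\pm, \sigma_\pm) \leq 0$ for every admissible $i$-shock $(u_-, u_+, \sigma_\pm)$ within $\epsilon$ of $(u_L, u_R, \sigma(s))$. Evaluating this inequality on a transverse perturbation direction $v'_0 \in \text{span}(\{r_k(u_L)\}_{k \neq i})$ and sending $s \to 0$ should produce the advertised negative-semidefiniteness condition.

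Concretely, I would fix $v'_0 \in \text{span}(\{r_k(u_L)\}_{k \neq i})$ and consider the shocks $u_-(s) = u_L + s v'_0$, $u_+(s) = S^i_{u_-(s)}(s)$, with corresponding speed $\sigma_\pm(s)$. For $s$ small these are admissible $i$-shocks with traces within $O(s)$ of the base shock states, so the local-attractor hypothesis gives $D_{RH}(u_\pm(s), \sigma_\pm(s)) \leq 0$. Writing $D_{RH} = A - a(s)\,B$ with $A = q(u_+;u_R) - \sigma_\pm \eta(u_+|u_R)$ and $B = q(u_-;u_L) - \sigma_\pm \eta(u_-|u_L)$, I would expand both pieces to order $s^3$ using $u_R = u_L + sr_i + O(s^2)$, $u_+(s) - u_R = sv'_0 + s^2 \nabla_{v'_0} r_i + O(s^3)$, $a(s) = 1 + a'(0)s + O(s^2)$, $\sigma_\pm(s) = \lambda_i(u_L) + O(s)$, together with the standard relative-entropy expansions whose leading coefficients are $\nabla^2 \eta$ and $\nabla^2 \eta \cdot f'$ (the latter symmetric by Friedrichs--Lax). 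The leading $O(s^2)$ parts of $A$ and $B$ coincide and cancel in $A - B$, while the weight defect $(1 - a(s))B$ produces the contribution $-\tfrac{a'(0)s^3}{2}\, v'_0{}^T \nabla^2 \eta (f' - \lambda_i I) v'_0$, already delivering the $-a'(0)\nabla^2 \eta(f' - \lambda_i I)$ piece of the target matrix.

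The residual $O(s^3)$ terms involve $\nabla_{v'_0} r_i$, $\nabla^3 \eta$, and $f''$, and must collapse into $\tfrac{1}{2}\, r_i^T \nabla^2 \eta\, f''(v'_0, v'_0)$. The algebraic ingredients that make this collapse work are, first, the Friedrichs--Lax identity $r_i^T \nabla^2 \eta (f' - \lambda_i I) = 0$ together with its first derivative
\[
v_1^T \nabla^2 \eta\, f''(v_2, v_3) - v_2^T \nabla^2 \eta\, f''(v_1, v_3) = \nabla^3 \eta(v_2, f' v_1, v_3) - \nabla^3 \eta(v_1, f' v_2, v_3),
\]
and second, the consequence $(f' - \lambda_i I)\nabla_{v'_0} r_i = (\nabla_{v'_0}\lambda_i)\, r_i - f''(v'_0, r_i)$ of differentiating $f'(u) r_i(u) = \lambda_i(u) r_i(u)$ in direction $v'_0$. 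Combining these, the cubic remainders assemble into $\tfrac{1}{2} r_i^T \nabla^2 \eta\, f''(v'_0, v'_0)$, so that $D_{RH}(u_\pm(s), \sigma_\pm(s))/s^3$ at $v'_0$ equals one half of the quadratic form in the theorem, plus $O(s)$. Taking $s \to 0$ and using $D_{RH} \leq 0$ then yields the negative semidefiniteness on $\text{span}(\{r_k(u_L)\}_{k \neq i})$.

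The hard part will be the tensor-algebra bookkeeping in this order-$s^3$ expansion: a priori unrelated-looking contributions coming from directional derivatives of $r_i$ and $\lambda_i$, from $(r_i \cdot \nabla)\nabla^2 \eta$, from $\nabla^3 \eta$, and from $f''$ must conspire into the single clean expression $r_i^T \nabla^2 \eta\, f''$, and this relies delicately on the Friedrichs--Lax identities above. A subsidiary point needing care is the freedom in the perturbed shock strength $\tau = s(1 + \nu)$ near $\nu = 0$: one must verify that the linear-in-$\nu$ derivative of $D_{RH}/s^3$ at $\nu = 0$ vanishes on $\text{span}(\{r_k(u_L)\}_{k \neq i})$, lest an extra necessary condition beyond the stated matrix inequality emerge. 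I expect this vanishing to follow from the same algebraic identities, but it is an essential sanity check before extracting the leading-order semidefiniteness statement.
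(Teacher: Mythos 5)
Your overall strategy is the same as the paper's -- reduce the local-attractor hypothesis to the local form of $(\mathcal{H}2)$, namely $D_{RH}(u_\pm,\sigma_\pm)\leq 0$ for nearby $i$-shocks, and extract the matrix as the leading coefficient of a Taylor expansion in $s$ -- but your implementation has a genuine gap in the scaling. The hypothesis only guarantees contraction against perturbations in $\mathcal{S}_{weak}^{\epsilon}(u_L,u_R)$ for \emph{some} $\epsilon>0$, and this $\epsilon=\epsilon(s)$ is completely unquantified: it may shrink much faster than $s$. Your test family has $|u_-(s)-u_L|=s|v_0'|$, so there is no justification for asserting $D_{RH}(u_\pm(s),\sigma_\pm(s))\leq 0$ along it; you would be invoking an inequality on states that may lie outside the region where the hypothesis says anything. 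The two scales must be decoupled. This is precisely why the paper routes the argument through the maximal-shock dissipation $D_{max}$ of~\eqref{eq:Dmax}: for each \emph{fixed} $s$ one has $D_{max}(u_L)=0$, $D_{max}\leq 0$ on some (arbitrarily small) neighborhood of $u_L$, and $\nabla D_{max}(u_L)=0$ by~\eqref{eq:nabla-Dmax}, so $u_L$ is a local maximum and $\nabla^2 D_{max}(u_L)$ is negative semidefinite regardless of how small $\epsilon(s)$ is; only \emph{then} does one send $s\to 0$ in $\tfrac1s\nabla^2 D_{max}(u_L)$, which is the computation of Section~\ref{section:comp}. Your argument can be repaired along the same lines by treating the transverse perturbation amplitude $t$ as an independent parameter, deducing negative semidefiniteness of the $t$-Hessian at $t=0$ for each fixed $s$, and taking $s\to 0$ afterwards.

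Two secondary points. First, your choice of partner state $u_+(s)=S^i_{u_-(s)}(s)$ with shock strength frozen at $s$ is not the maximizer of $D_{RH}$ along the shock curve; you correctly flag that one must check the $\nu$-derivative vanishes at $\nu=0$. It does work out for transverse directions (the optimal strength $\tau^*(u_L+tv_0')$ differs from $s$ only by $o(1)\cdot t$ as $s\to 0$, by~\eqref{lemma-3}, so the discrepancy with $D_{max}$ is $o(st^2)$, below the leading order), but the paper's use of $u^+(u)$ defined by~\eqref{eq:u-plus-dist}--\eqref{eq:u-plus-on-S} sidesteps this entirely and also delivers $\nabla D_{max}(u_L)=0$ for free. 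Second, you should verify the sign condition $\tilde\eta(u_-)<0<\tilde\eta(u_+)$ for your family (it holds, since $\nabla\tilde\eta(u_L)v_0'=O(s^2)$ for transverse $v_0'$ while $\tilde\eta(u_L)\sim -cs^2$, but it is part of the admissibility you are invoking). The order-$s^3$ tensor bookkeeping you defer is exactly the content of Section~\ref{section:comp} (the terms $D_{jk},E_{jk},F_{jk}$), and the identities you list -- differentiating $f'r_i=\lambda_i r_i$ as in~\eqref{eq:sj} and the symmetry of $\nabla^2\eta\,f'$ -- are indeed the ones that make it collapse.
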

We note that for weights with $a(0) \ne 1$ or $\lim_{s\to 0} |a'(s)| = \infty$ we cannot have $a$-contraction for small intermediate shocks, as this would cause $\text{diam}(\partial \Pi) \to 0$ as $s \to 0$ leading to a violation of ($\mathcal{H}1$) in Theorem \ref{thm:kv}. See \cite{kang16} for further details. 
This consideration greatly influences the strategy of our proofs, where we borrow many strategies and results from \cite{gkv}.
To date, these are the only results on linear weights ($a(s) = 1+Cs$) for small shocks.
Theorem \ref{thm:nec}'s converse can be made slightly stronger, as we actually need only consider states near $(u_L,u_R)$. 
As a corollary, we are able to show small shocks do not $a$-contract even with arbitrarily small perturbations. 
\begin{cor}
    Fix a weight $a$ satisfying the assumptions of Theorem \ref{thm:nec}. If the matrix
    $$ -a'(0)\nabla^2 \eta(u_L)(f'(u_L) - \lambda_i(u_L)I) + r_i(u_L)^t\nabla^2 \eta(u_L)f''(u_L) $$
    is not negative semidefinite then there exists $s_0 > 0$, depending on $a$, $u_L$, and $f$, such that for all $0 < s < s_0$ and $\delta > 0$ the shock $(u_L, S^i_{u_L}(s), \sigma(s))$ has near by states $(u_-,u_+)$ such that 
    $$ |u_- - u_L| + |u_+ - u_R| < \delta \quad \text{and} \quad u_+ = S_{u_-}^i(t),\,\text{for some } t > 0 $$
    for which $D_{RH}(u_{\pm},\sigma_\pm) > 0$. 
    \label{cor:converse}
\end{cor}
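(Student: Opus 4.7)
The plan is to rerun the Taylor expansion underlying Theorem \ref{thm:nec} quantitatively, for each fixed small shock size $s$ separately, and thereby exhibit an explicit family of nearby admissible shock pairs along which $D_{RH}$ is strictly positive.

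Since the matrix $M$ in the hypothesis fails to be negative semidefinite on $\text{span}(\{r_k(u_L)\}_{k\ne i})$, pick a unit vector $w$ in this subspace with $w^T M w > 0$. For each $0 < s < s_0$ (with $s_0$ shrunk below), let $u_R = S^i_{u_L}(s)$ and consider the one-parameter family
$$ u_-(\tau) = u_L + \tau w, \qquad u_+(\tau) = S^i_{u_-(\tau)}(s), \qquad \sigma_\pm(\tau) = \sigma^i_{u_-(\tau)}(s), $$
for $\tau \geq 0$ small. By construction each $(u_-(\tau), u_+(\tau), \sigma_\pm(\tau))$ is an admissible $i$-shock of size $t = s$, matching the form required in the conclusion; at $\tau = 0$ the triple is the original shock $(u_L, u_R, \sigma(s))$, so $\Psi(0) := D_{RH}(u_\pm(0), \sigma_\pm(0)) = 0$.

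The key step is to expand $\Psi(\tau) := D_{RH}(u_\pm(\tau), \sigma_\pm(\tau))$ to second order in $\tau$ while tracking the $s$-dependence of the coefficients. The identities $\eta(v|v) = q(v;v) = 0$, $\nabla_u \eta(u|v)|_{u=v} = 0$, and $\nabla_u q(u;v)|_{u=v} = 0$ (the latter from the entropy-flux relation $\eta' f' = q'$) give $\Psi(0) = \Psi'(0) = 0$. Using the Hessians $\nabla_u^2 \eta(u|v)|_{u=v} = \nabla^2\eta(v)$ and $\nabla_u^2 q(u;v)|_{u=v} = \nabla^2\eta(v) f'(v)$, one obtains
$$ \Psi''(0) = u_+'(0)^T \nabla^2\eta(u_R)\bigl(f'(u_R) - \sigma(s) I\bigr) u_+'(0) - a(s)\, w^T \nabla^2\eta(u_L)\bigl(f'(u_L) - \sigma(s) I\bigr) w. $$
Expanding $u_R = u_L + s\, r_i(u_L) + O(s^2)$, $\sigma(s) = \lambda_i(u_L) + O(s)$, $a(s) = 1 + a'(0) s + o(s)$, and $u_+'(0) = w + s (\nabla r_i)(u_L) w + O(s^2)$ (smooth dependence of the shock curve on its base point), the $s^0$ contributions cancel and the $s^1$ coefficient assembles into $w^T M w$. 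This identification is exactly the algebraic identity driving Theorem \ref{thm:nec}, so
$$ \Psi''(0) = s\, w^T M w + o(s) \qquad \text{as } s \to 0^+. $$
Shrinking $s_0$ so that the $o(s)$ correction is dominated by $\tfrac{1}{2} s\, w^T M w$ for all $s \in (0, s_0)$ gives $\Psi''(0) \geq \tfrac{1}{2} s\, w^T M w > 0$ for every such $s$, and Taylor's theorem produces $\tau_0(s) > 0$ with $\Psi(\tau) > 0$ on $(0, \tau_0(s))$.

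Finally, for any $\delta > 0$, smoothness yields $|u_-(\tau) - u_L| + |u_+(\tau) - u_R| \leq C_s\, \tau$, so choosing $\tau \in (0, \tau_0(s))$ small enough delivers a pair within $\delta$ of $(u_L, u_R)$ with $u_+ = S^i_{u_-}(s)$ and $D_{RH}(u_\pm, \sigma_\pm) > 0$. The principal obstacle is verifying the assembly of the $s^1$ coefficient of $\Psi''(0)$ into $w^T M w$, which involves cancellations among the $f''$, $\nabla^3 \eta$, and $\nabla r_i$ contributions together with the symmetry of $\nabla^2\eta\, f'$; since this is precisely the calculation carried out in the proof of Theorem \ref{thm:nec}, it can be quoted here with the quantitative strengthening of keeping the $s$-dependence explicit rather than taking $s \to 0$.
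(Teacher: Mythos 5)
Your construction differs from the paper's in a way that matters for the key computation. The paper works with the maximal-shock dissipation $D_{max}(u)=D_{RH}(u,u^+(u),\sigma_\pm)$, where the amplitude $s^*(u)$ of the attached shock varies with $u$ according to the implicit equation \eqref{eq:u-plus-dist}; it then invokes the Hessian $\nabla^2D_{max}(u_L)$ already computed in Section \ref{section:comp} (via \eqref{eq:nabla2-Dmax}, \eqref{eq:rh-der}--\eqref{eq:eta-der}, and the limits \eqref{lemma-3}, \eqref{lemma-5}), Taylor expands along the bad direction, and controls $|u^+(u)-u_R|$ with the Lipschitz bound \eqref{eq:u-plus-lip}. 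You instead freeze the shock amplitude at $t=s$ and move the left state, so your second derivative is $\Psi''(0)=\partial^2_{ww}G(u_L,s)$ with $G(u,t):=D_{RH}(u,S^i_u(t),\sigma^i_u(t))$, whereas the paper's is $\nabla^2D_{max}(u_L):w\otimes w=\partial^2_{ww}G+2\partial^2_{wt}G\,(\nabla s^*\!\cdot w)+\partial^2_{tt}G\,(\nabla s^*\!\cdot w)^2$ (the $\partial_tG$ term drops since $s^*$ is the critical point). These are not the same object, and your $\nabla u_+$ is $I+s\nabla r_i(u_L)+O(s^2)$ rather than the $\nabla u^+$ of \eqref{eq:rh-der}--\eqref{eq:eta-der} whose limit is the projection in \eqref{lemma-3}. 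So the step you defer -- ``this is precisely the calculation carried out in the proof of Theorem \ref{thm:nec}, it can be quoted here'' -- is not legitimate as stated: Section \ref{section:comp} computes the limit of $\tfrac{1}{s}\nabla^2D_{max}(u_L)$, not of $\tfrac{1}{s}\partial^2_{ww}G(u_L,s)$, and a priori the two could differ at the leading order $O(s)$ that carries all the information. This is the genuine gap: the central identity $\Psi''(0)=s\,w^TMw+o(s)$ is asserted by an analogy that is not exact.

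The gap is repairable, and your overall strategy does work. Using the envelope relation $\partial_tG(u,s^*(u))\equiv 0$ one gets $\nabla^2D_{max}(u_L):w\otimes w-\Psi''(0)=-\partial^2_{tt}G(u_L,s)\,(\nabla s^*\!\cdot w)^2$. By \eqref{eq:DRH-Dcont}, $\partial_tG(u,t)=\dot\sigma(t)(\tilde\eta(u)+\eta(u|S^i_u(t)))$, whence $\partial^2_{tt}G(u_L,s)=\dot\sigma(s)\tfrac{d}{dt}\eta(u_L|S^i_{u_L}(t))\big|_{t=s}=O(s)$, and $\partial^2_{wt}G(u_L,s)=\dot\sigma(s)\big[\nabla\tilde\eta(u_L)+\partial_u\eta(u|S^i_u(s))\big|_{u_L}\big]\cdot w$; a short expansion shows the bracket is $s\,r_i(u_L)^t\nabla^2\eta(u_L)w+O(s^2)$, which is $O(s^2)$ for $w\in\mathrm{span}\{r_k(u_L)\}_{k\ne i}$ by the orthogonality $r_i^t\nabla^2\eta\,r_k=0$. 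Hence $\nabla s^*\!\cdot w=-\partial^2_{wt}G/\partial^2_{tt}G=O(s)$ and the discrepancy is $O(s^3)=o(s)$, so on the relevant subspace $\tfrac{1}{s}\Psi''(0)$ and $\tfrac{1}{s}\nabla^2D_{max}(u_L):w\otimes w$ share the limit $w^TMw$. With that bridge supplied (or, alternatively, with your fixed-amplitude expansion carried out in full, which requires its own set of cancellations among the $\nabla r_i$, $\nabla^3\eta$, and $f''$ terms), the rest of your argument -- $\Psi(0)=\Psi'(0)=0$ from the vanishing of $\nabla_u\eta(\cdot|v)$ and $\nabla_uq(\cdot\,;v)$ at $u=v$, the choice of a single $w$ from the limiting matrix $M$ valid for all small $s$, and the $O(\tau)$ control of $|u_\pm(\tau)-u_{L,R}|$ -- is correct and delivers the corollary.
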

From this corollary we can then construct perturbations of our fixed shock with initial data 
\begin{equation} \label{eq:pert-construction}
     u_0^\delta(x) = \begin{cases}
    u_L & x < -2  \\
    u_- & -1 < x < 0 \\
    u_+ & 0 < x < 1 \\
    u_R & x > 2
\end{cases} 
\end{equation}
with the function smooth on the regions $[-2, -1]$ and $[1,2]$. 
By construction we then have $\frac{d}{dt} E(t) = D_{RH}(u_{\pm}) > 0$ where equality follows from the perturbation possessing just one shock for $t$ sufficiently small. 
Hence the shock $(u_L, u_R,\sigma_{LR})$ is not a local attractor. 
As an application of this corollary we show the MHD system examined in \cite{kang16} has many states $U_L$ for which small shocks with left state $U_L$ are not local attractors for any weight. 
This is in contrast with the extremal case, where generic extremal shocks $a$-contract against the large class $\mathcal{S}_{weak}$ when the system satisfies a few assumptions. 

\section{Preliminaries} \label{section:prelim}
For convenience, we recreate the necessary and sufficient criteria for $a$-contraction provided by Kang and Vasseur \cite{kang16}. First we must define ``$a$-relative entropy stable''
\begin{definition}
    An entropic Rankine-Hugoniot discontinuity $(u_L,u_R,\sigma_{LR})$ is $a$-relative entropy stable with respect to a weight $a > 0$ if it satisfies
    \begin{description}
        \item[$(\mathcal{H}1)$] For any $u \in \partial \Pi$ we have $D_{cont}(u) \leq 0$, 
        \item[$(\mathcal{H}2)$] For any entropic shock $(u_-,u_+,\sigma_\pm)$ such that $\tilde \eta(u_-) < 0 < \tilde \eta(u_+)$ we have $D_{RH}(u_\pm, \sigma_\pm) \leq 0.$
    \end{description}

\end{definition}
By means of~\eqref{eq:dist-derivative} this relative entropy stability property is related to the dissipations defined in the introduction by the following theorem. 
\begin{theorem}[\cite{kang16}]
    Fix $a > 0$. If an entropic Rankine-Hugoniot discontinuity $(u_L,u_R,\sigma_{LR})$ is such that
    given any $u \in \mathcal{S}_{weak}$ there exists a Lipshitz $h:[0,\infty) \to \R$ such that
    $$E(t) = \int_{-\infty}^{h(t)} a \eta(u|u_L) \,dx + \int_{h(t)}^\infty \eta(u|u_R)\,dx $$
    is non-increasing then $(u_L,u_R,\sigma_{LR})$ is $a$-relative entropy stable.

    \label{thm:kv}
\end{theorem}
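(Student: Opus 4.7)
The plan is to prove the contrapositive: assuming either $(\mathcal{H}1)$ or $(\mathcal{H}2)$ fails, I will construct an explicit perturbation $u \in \mathcal{S}_{weak}$ admitting no Lipschitz shift $h$ for which $E$ is non-increasing. The driving principle is that the upper bound~\eqref{eq:dist-derivative} is tight modulo the (nonnegative) bulk entropy dissipation, so the trace value $u(t,h(t)^\pm)$ determines the sign of $\tfrac{d}{dt}E$; in particular, for perturbations that are smooth on a neighborhood of the path $h$, the bound becomes an exact equality.

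\textbf{Violation of $(\mathcal{H}1)$.} If there is $u^*\in\partial\Pi$ with $D_{cont}(u^*)>0$, I build initial data $u_0$ equal to $u^*$ on an interval $(-R,R)$, transitioning smoothly to $u_L,u_R$ outside $(-2R,2R)$. Local well-posedness for classical data gives a smooth solution on $[0,T]$ with vanishing entropy dissipation. By finite propagation speed, any Lipschitz $h$ with $h(0)\in(-R,R)$ keeps its trace equal to $u^*$ on some $[0,\tau)$; since $\tilde\eta(u^*)=0$, the boundary term in~\eqref{eq:dist-derivative} collapses to $-\tilde q(u^*)=D_{cont}(u^*)>0$, so $E$ strictly increases. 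Shifts starting outside $(-R,R)$ are handled by a translation argument: by re-centering $u_0$ I may always assume that $h(0)=0$ lies inside the constant region, reducing to the previous case.

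\textbf{Violation of $(\mathcal{H}2)$.} If there is an entropic shock $(u_-,u_+,\sigma_\pm)$ with $\tilde\eta(u_-)<0<\tilde\eta(u_+)$ and $D_{RH}(u_\pm,\sigma_\pm)>0$, I use a construction like~\eqref{eq:pert-construction}. For short time the only structure near $x=0$ is the single shock $(u_-,u_+)$ traveling at $\sigma_\pm$. A Lipschitz shift falls into three regimes: (i) coinciding with the shock, in which case the two-sided traces $u_-,u_+$ substituted into the boundary term yield exactly $\tfrac{d}{dt}E=D_{RH}(u_\pm,\sigma_\pm)>0$; (ii) lying strictly on one side, where the Lipschitz bound on $|\dot h|$ combined with $\tilde\eta(u_-)<0<\tilde\eta(u_+)$ makes $\dot h\tilde\eta(u_\mp)-\tilde q(u_\mp)$ strictly positive on a nonempty time interval; or (iii) crossing the shock at some $t>0$, which adds a positive jump contribution. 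Each regime rules out $E$ being non-increasing on any initial interval.

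The main obstacle is case (ii) above: I must verify that no admissible $\dot h$ can simultaneously keep the shift on one side of the moving shock and satisfy $\dot h\tilde\eta(u_\mp)-\tilde q(u_\mp)\le 0$. I expect to expand $\sigma_\pm\tilde\eta(u_\mp)-\tilde q(u_\mp)$ using the Rankine-Hugoniot identity $f(u_+)-f(u_-)=\sigma_\pm(u_+-u_-)$ and strict convexity of $\eta$, reducing the inequality to a quantitative consequence of the hypothesis $D_{RH}(u_\pm,\sigma_\pm)>0$. A secondary difficulty in the $(\mathcal{H}1)$ argument is justifying the translation reduction rigorously when $u_0$ is not itself a constant, but since the dissipation is local in $h(t)$ this only requires redefining the perturbation and not the inequality~\eqref{eq:dist-derivative} itself.
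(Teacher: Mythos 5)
This theorem is not proved in the paper at all: it is imported verbatim from \cite{kang16}, and the only proof content offered here is the one\nobreakdash-line remark that perturbations of the form~\eqref{eq:pert-construction} make the bound~\eqref{eq:dist-derivative} sharp, which is exactly your $(\mathcal{H}2)$ construction. So your overall strategy (contrapositive plus explicit perturbations whose traces realize $D_{cont}$ or $D_{RH}$) is the right one and is the strategy of the cited source.

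However, there is a genuine gap, and it is the same one in both halves: the quantifier order. You must exhibit a \emph{single} perturbation $u$ that defeats \emph{every} Lipschitz shift $h$. The ``translation argument'' in the $(\mathcal{H}1)$ case re-centers the initial data after $h$ is chosen, i.e.\ it changes $u$ rather than handling a different $h$ for the same $u$; as written it proves nothing. More substantively, in neither case do you rule out shifts that simply stay in the unperturbed far field. There the two-sided trace is the constant state $u_L$ or $u_R$, Lemma~\ref{lemma:leger-follow-shock} is vacuous at points of continuity, and no Lipschitz bound on $h$ is imposed, so the adversary may take, say, $\dot h = \tilde q(u_R)/\tilde\eta(u_R)$ and obtain $\frac{d}{dt}E = 0$ for as long as $h$ stays outside the domain of influence of the perturbation --- which, since $h(0)$ is unconstrained, can be an arbitrarily long initial interval. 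Excluding such shifts requires real input: by~\eqref{eq:quant-diss}, $\tilde q(u_R)/\tilde\eta(u_R) = q(u_R;u_L)/\eta(u_R|u_L) \le \sigma_{LR}$, so the escaping shift is eventually overtaken by the waves emanating from the perturbation, but by then the solution has left the smooth/single-shock regime on which your exact identity for $\frac{d}{dt}E$ relies. The clean way around this (and, essentially, what \cite{kang16} does) is to abandon the derivative analysis and minimize $y \mapsto E(t;y)$ directly over the shift position, using $\partial_y E(t;y) = \tilde\eta(u(t,y))$ to locate the minimizer on $\partial\Pi$ and then showing the minimum itself increases; your case (ii) computation only addresses shifts adjacent to the shock, not these far-field shifts. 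A secondary point: membership in $\mathcal{S}_{weak}$ requires a global-in-time entropy solution with strong traces, which is not automatic for the large data you prescribe and should at least be acknowledged.
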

In particular, $(\mathcal{H}2)$ is necessary because we can construct perturbations such as~\eqref{eq:pert-construction} where the upper bound in equation~\eqref{eq:dist-derivative} is sharp;
\begin{equation} \frac{d}{dt}E(t) = D_{RH}(u_\pm, \sigma_\pm). \end{equation}

To bound the discontinuous dissipation $D_{RH}$ we will use the following lemma, which quantifies the entropy loss across a shock. 
\begin{lemma}[\cite{leger}]
    Suppose $\eta,\,q$ are an entropy flux pair and $u,v \in \mathcal{V}$. Then for $1 \leq i \leq n$, 
    \begin{equation}
        q(S_u^i(s);v) - \sigma_u^i(s)\eta(S_u^i(s)|v) = q(u;v) - \sigma_u^i(s) \eta(u|v) + \int_0^s \dot \sigma(t)\eta(u|S_u^i(t))\,dt. \label{eq:quant-diss}
    \end{equation}
\end{lemma}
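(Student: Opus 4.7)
The plan is to reduce both sides to a common $v$-independent quantity via the Rankine--Hugoniot relation, then identify the integrand by differentiating in $s$.

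First I would abbreviate $u(s) = S_u^i(s)$ and $\sigma(s) = \sigma_u^i(s)$ and expand the left-hand side difference
$$ G(s) := \bigl[q(u(s);v) - \sigma(s)\eta(u(s)|v)\bigr] - \bigl[q(u;v) - \sigma(s)\eta(u|v)\bigr] $$
using the definitions of relative entropy and relative entropy flux. After grouping terms, the pieces containing $\nabla\eta(v)$ collapse to $-\nabla\eta(v)\bigl[f(u(s)) - f(u) - \sigma(s)(u(s)-u)\bigr]$, which vanishes identically by the Rankine--Hugoniot jump condition
$$ f(u(s)) - f(u) = \sigma(s)(u(s) - u). $$
This yields the $v$-independent identity
$$ G(s) = q(u(s)) - q(u) - \sigma(s)\bigl[\eta(u(s)) - \eta(u)\bigr]. $$
So it suffices to prove $G(s) = \int_0^s \dot\sigma(t)\,\eta(u|u(t))\,dt$, and since $G(0) = 0$, this reduces to showing $G'(s) = \dot\sigma(s)\,\eta(u|u(s))$.

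Next I would differentiate $G$ in $s$, using the compatibility relation $\nabla q = \nabla\eta\, f'$ from \eqref{eq:entropy-def}:
$$ G'(s) = \nabla\eta(u(s))\bigl[f'(u(s)) - \sigma(s)I\bigr] u'(s) - \dot\sigma(s)\bigl[\eta(u(s)) - \eta(u)\bigr]. $$
Differentiating the Rankine--Hugoniot relation in $s$ gives
$$ \bigl[f'(u(s)) - \sigma(s)I\bigr]u'(s) = \dot\sigma(s)\bigl(u(s) - u\bigr), $$
so substituting this in and factoring $\dot\sigma(s)$ produces
$$ G'(s) = \dot\sigma(s)\Bigl[\nabla\eta(u(s))(u(s)-u) - \eta(u(s)) + \eta(u)\Bigr] = \dot\sigma(s)\,\eta(u|u(s)), $$
where the last equality is just the definition of $\eta(u|u(s))$. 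Integrating from $0$ to $s$ with $G(0)=0$ gives the claimed identity.

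There is no real obstacle here: the proof is essentially a bookkeeping exercise, and the only non-obvious observation is that the Rankine--Hugoniot identity makes the expression $v$-independent, so the relative structure collapses to a one-variable computation along the shock curve. The differentiated Rankine--Hugoniot relation is what converts the derivative of the left-hand side into the relative entropy $\eta(u|u(s))$, which is the conceptual content of the lemma.
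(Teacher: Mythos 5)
Your proof is correct and is essentially the standard argument from the cited reference (the paper itself states this lemma without proof): the Rankine--Hugoniot relation kills the $\nabla\eta(v)$ terms so the difference of the two sides is $v$-independent, and differentiating along the shock curve using $\nabla q = \nabla\eta\, f'$ together with the differentiated Rankine--Hugoniot identity produces exactly $\dot\sigma(s)\,\eta(u|S_u^i(s))$. No gaps.
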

Formula~\eqref{eq:quant-diss} can be used to deduce the following relation on the dissipations~\eqref{eq:Dcont}, ~\eqref{eq:DRH}
\begin{lemma}[\cite{gkv}]
    Suppose $(\eta,q)$ are an entropy-entropy flux pair. For any $s \geq 0$ we have 
    \begin{equation}D_{RH}(u, S_u^i(s), \sigma(s)) = D_{cont}(u) + \int_0^s \dot \sigma(t)(\tilde \eta(u) + \eta(u | S_u^i(t)))\,dt. \label{eq:DRH-Dcont} \end{equation}
\end{lemma}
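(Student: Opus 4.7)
The plan is a direct computation: expand $D_{RH}(u,S_u^i(s),\sigma(s))$ using its definition, apply the previous lemma to rewrite the $u_+ = S_u^i(s)$ contribution in terms of $u$, and then recognize $D_{cont}(u)$ by splitting $\sigma(s)$ around its value $\lambda_i(u)$ at $s=0$.

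First, by the definition \eqref{eq:DRH} with $u_- = u$, $u_+ = S_u^i(s)$, and $\sigma_\pm = \sigma(s)$,
\begin{equation*}
D_{RH}(u,S_u^i(s),\sigma(s)) = \bigl(q(S_u^i(s);u_R) - \sigma(s)\eta(S_u^i(s)|u_R)\bigr) - a\bigl(q(u;u_L) - \sigma(s)\eta(u|u_L)\bigr).
\end{equation*}
Applying the entropy-loss identity \eqref{eq:quant-diss} with $v = u_R$ to the first parenthesis,
\begin{equation*}
q(S_u^i(s);u_R) - \sigma(s)\eta(S_u^i(s)|u_R) = q(u;u_R) - \sigma(s)\eta(u|u_R) + \int_0^s \dot\sigma(t)\,\eta(u|S_u^i(t))\,dt.
\end{equation*}
Substituting and collecting the $u_L$- and $u_R$-terms using the notation \eqref{eq:til-eta}--\eqref{eq:til-q} yields
\begin{equation*}
D_{RH}(u,S_u^i(s),\sigma(s)) = -\tilde q(u) + \sigma(s)\tilde\eta(u) + \int_0^s \dot\sigma(t)\,\eta(u|S_u^i(t))\,dt.
\end{equation*}

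The final step is to split $\sigma(s)$. Since $\sigma(0) = \lambda_i(u)$ (the shock speed at zero amplitude equals the characteristic speed),
\begin{equation*}
\sigma(s)\tilde\eta(u) = \lambda_i(u)\tilde\eta(u) + \tilde\eta(u)\int_0^s \dot\sigma(t)\,dt,
\end{equation*}
so that, recalling $D_{cont}(u) = -\tilde q(u) + \lambda_i(u)\tilde\eta(u)$ from \eqref{eq:Dcont}, we obtain
\begin{equation*}
D_{RH}(u,S_u^i(s),\sigma(s)) = D_{cont}(u) + \int_0^s \dot\sigma(t)\bigl(\tilde\eta(u) + \eta(u|S_u^i(t))\bigr)\,dt,
\end{equation*}
which is the desired identity.

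There is no real obstacle here, as the lemma is essentially an algebraic reorganization of \eqref{eq:quant-diss}. The only point one must be careful about is the asymmetric roles of $u_-$ and $u_+$ in the definition of $D_{RH}$, so that the identity \eqref{eq:quant-diss} is applied only on the $u_R$ side (where $u_+$ actually appears through $S_u^i(s)$), while the $u_L$ side contributes simply $-a(q(u;u_L) - \sigma(s)\eta(u|u_L))$ without any integral correction.
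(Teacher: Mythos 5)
Your proof is correct and is precisely the derivation the paper intends: it cites the lemma from \cite{gkv} and notes only that it "can be used to deduce" the identity from \eqref{eq:quant-diss}, which is exactly your two-step reorganization (apply \eqref{eq:quant-diss} with $v=u_R$ on the $u_+$ side, then split $\sigma(s)=\lambda_i(u)+\int_0^s\dot\sigma(t)\,dt$). No issues.
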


For further lemmas we wish to have that $\tilde \eta$ is convex. 
We compute $\nabla^2 \tilde \eta(u) = (a(s) - 1) \nabla \eta(u)$ and we assume without loss of generality that there exists $s_0 > 0$ such that $a(s) - 1 \geq 0$ for all $s < s_0$. 
This is trivial when $a'(0) > 0$. 
If $a'(0) < 0$ we can perform the change of variables $x = -y$, which will yield the new system 
$$ u_t - (f(u))_y = 0 $$
and swaps $u_L,u_R$, giving a new weight $\tilde a(s) = a(s)^{-1}$ which satisfies $\tilde a'(0) > 0$. 
Finally, in the case that $a'(0) = 0$ we know that either $a(s) - 1 \geq 0$ or $a(s) - 1 \leq 0$ infinitely often as $s \to 0$. 
If we restrict ourselves to considering only $s$ in one of these two cases the limit computation in section \ref{section:comp} still follow, as long as $0$ is an accumulation point for such $s$.

Golding, Krupa, and Vasseur further observed that, under the assumption that $\eta(u|S_u^i(s))$ is increasing, the dissipation $D_{RH}(u, S_u^i(s), \sigma_\pm)$ is maximized by at most a single state $u^+(u) := S_{u_-}^i(s^*)$ satisfying the implicit equations
\begin{align}
    \eta(u | u^+(u)) &= - \tilde \eta(u) \label{eq:u-plus-dist}\\
    f(u^+(u)) - f(u) &= \sigma_\pm (u^+(u) - u). \label{eq:u-plus-on-S}
\end{align}
for states $u \in \Pi$. 
We refer to the shock $(u,u^+(u))$ as the ``maximal shock'' for the $i^\text{th}$ family relative to our fixed shock $(u_L,u_R)$. 
We then define the corresponding dissipation to this state 
\begin{equation}D_{max}(u) := D_{RH}(u,S_u^1(s^*), \sigma_u^1(s^*)) = D_{RH}(u,u^+(u), \sigma_{\pm}).\label{eq:Dmax}\end{equation}
Differentiating the equations~\eqref{eq:u-plus-dist} and~\eqref{eq:u-plus-on-S} gives us the following relations on $\nabla u^+$
\begin{align}
    [f'(u) - \sigma_{\pm} I] + (u^+ - u)\otimes \nabla \sigma_\pm&=[f'(u^+) - \sigma_\pm I] \nabla u^+, \label{eq:rh-der} \\
    [\nabla \eta( u_R) - \nabla \eta(u^+)] + a [\nabla \eta(u) - \nabla \eta(u_L)] &= (u - u^+)^t \nabla^2 \eta(u^+) \nabla u^+. \label{eq:eta-der}
\end{align}

We also will also use several other identities primarily found in \cite{gkv} which for convenience are reproduced here. 
\begin{lemma}[\cite{gkv}]
    We take the same assumptions on our weight as in Theorem \ref{thm:nec}.
    For shock size $s>0$ sufficiently small and states $u \in \Pi$ we have 
    \begin{align}
        l_i(u)f''(u):r_i(u)\otimes v =& (l_i(u)r_i(u))(\nabla \lambda(u)v) \label{eq:sj} \\
        |\nabla u^+(u)| \leq& C\label{eq:u-plus-lip}\\
        \lim_{s \to 0} l_j(u_L)\nabla u^+(u_L)r_k(u_L) =& (1-\delta_{ik})l_j(u_L)r_k(u_L) \label{lemma-3}\\
        \nabla D_{max}(u) =& [\nabla \eta(u^+(u)) - \nabla \eta(u_R) - a [\nabla \eta(u) - \nabla \eta(u_L)] ] \label{eq:nabla-Dmax} \\
        &\quad (f'(u) - \sigma_{\pm} I ) \nonumber\\
        \nabla^2 D_{max}(u) =& [\nabla^2 \eta(u^+(u)) \nabla u^+(u) - a \nabla^2 \eta(u) ](f'(u) - \sigma_{\pm} I ) \label{eq:nabla2-Dmax} \\
        &+ [\nabla \eta(u^+(u)) - \nabla \eta(u_R) - a [\nabla \eta(u) - \nabla \eta(u_L)] ]\nonumber\\
        &(f''(u) - I\otimes \nabla \sigma_{\pm}) \nonumber
    \end{align}
    where we understand the term $f'':v\otimes w$ in equation~\eqref{eq:sj} in the sense of the Fr\'{e}chet derivative. 

\end{lemma}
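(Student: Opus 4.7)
My plan is to treat the five identities in sequence, since each later one draws on the preceding. Identity~\eqref{eq:sj} is a direct consequence of differentiating the eigenvalue equation $f'(u) r_i(u) = \lambda_i(u) r_i(u)$ in the direction $v$ and contracting on the left with $l_i(u)$: the terms containing $\nabla r_i(u) v$ cancel against each other using $l_i(u) f'(u) = \lambda_i(u) l_i(u)$, leaving the stated Fr\'echet pairing.

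For~\eqref{eq:u-plus-lip}, which I expect to require the main technical effort, I would parametrize $u^+(u) = S_u^i(s^*(u))$ where $s^*(u)$ is the unique root of~\eqref{eq:u-plus-dist}, so that
$$ \nabla u^+(u) = (\nabla_u S_u^i)(s^*(u)) + (\partial_s S_u^i)(s^*(u)) \otimes \nabla s^*(u). $$
The first summand is bounded since $\nabla_u S_u^i(0) = I$ and $s^* = O(s)$. For $\nabla s^*$, implicit differentiation of~\eqref{eq:u-plus-dist} gives
$$ \big(\partial_s \eta(u|S_u^i(s))\big)\big|_{s=s^*}\, \nabla s^*(u) = -\nabla \tilde \eta(u) - \nabla_u \eta(u|S_u^i(s^*)), $$
and evaluated at $u = u_L$ both sides vanish to leading order in $s$. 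Resolving this $0/0$ by expanding to the next order in $s$ via $u_R - u_L = s\, r_i(u_L) + O(s^2)$, together with genuine nonlinearity and~\eqref{eq:sj}, shows the ratio stays bounded. The crux of the difficulty is that $[f'(u^+) - \sigma_\pm I]$ appearing in~\eqref{eq:rh-der} has an eigenvalue of size $O(s)$ in the direction $r_i(u^+)$, so the naive inversion blows up; the above parametrization is designed precisely to isolate this singular mode into $\nabla s^*$ and control it separately.

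Identity~\eqref{lemma-3} then follows by passing $s \to 0$ in~\eqref{eq:rh-der} and~\eqref{eq:eta-der} with $\nabla u^+$ now known to be uniformly bounded. For $j \ne i$, contracting~\eqref{eq:rh-der} on the left with $l_j(u_L)$ and on the right with $r_k(u_L)$, then dividing by the strict hyperbolicity gap $\lambda_j(u_L) - \lambda_i(u_L)$, yields $l_j(u_L)\, r_k(u_L) = l_j(u_L) \lim \nabla u^+(u_L) r_k(u_L)$, which agrees with $(1-\delta_{ik}) l_j(u_L) r_k(u_L)$ by biorthogonality. For $j = i$, the left-hand side of~\eqref{eq:eta-der} vanishes identically at $(u_L, u_R)$, while the right-hand side expands as $(u_L - u_R)^t \nabla^2 \eta(u_R) \nabla u^+(u_L) = -s\, r_i(u_L)^t \nabla^2 \eta(u_L) \nabla u^+(u_L) + O(s^2)$; since $\nabla^2 \eta(u_L) r_i(u_L)$ is proportional to $l_i(u_L)^t$ (a consequence of $\nabla^2 \eta\, f'$ being symmetric, from~\eqref{eq:entropy-def}), dividing by $s$ and sending $s \to 0$ gives $l_i(u_L) \lim \nabla u^+(u_L) = 0$, matching $(1-\delta_{ik}) l_i(u_L) r_k(u_L) = 0$.

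Finally,~\eqref{eq:nabla-Dmax} and~\eqref{eq:nabla2-Dmax} are direct chain-rule computations starting from $D_{max}(u) = q(u^+(u); u_R) - \sigma_\pm \eta(u^+(u)|u_R) - a\, q(u; u_L) + a \sigma_\pm \eta(u|u_L)$, recalling $\nabla_v q(\cdot; v_0) = [\nabla \eta(v) - \nabla \eta(v_0)] f'(v)$. The $\nabla u^+$-terms combine through~\eqref{eq:rh-der} to reproduce the factor $(f'(u) - \sigma_\pm I)$, while the $\nabla \sigma_\pm$-terms cancel by the algebraic identity $[\nabla \eta(u^+) - \nabla \eta(u_R)](u^+ - u) = \eta(u^+|u_R) - a\eta(u|u_L)$, which is itself a rewriting of~\eqref{eq:u-plus-dist} after expressing each relative entropy in terms of differences of $\eta$ and $\nabla \eta$. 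A further differentiation of~\eqref{eq:nabla-Dmax} then produces~\eqref{eq:nabla2-Dmax} on the nose, with the first group coming from differentiating the bracketed row vector through the chain rule in $u$ and $u^+(u)$, and the second from differentiating $(f'(u) - \sigma_\pm I)$.
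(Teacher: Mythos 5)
Your proof of \eqref{eq:sj} is exactly the paper's: differentiate $f'r_i=\lambda_i r_i$ in the direction $v$ and left-multiply by $l_i(u)$, so that the $\nabla r_i(u)v$ terms are annihilated by $l_i(\lambda_i I-f')=0$. For the remaining four identities the paper gives no argument at all --- it defers to Section 6 of \cite{gkv} with the remark that the proofs ``follow identically in our setting'' --- so what you have written is a reconstruction rather than an alternative route. The reconstruction is essentially sound. In particular, the cancellation you invoke for \eqref{eq:nabla-Dmax} is correct: after substituting \eqref{eq:rh-der}, the coefficient of $\nabla\sigma_\pm$ is $[\nabla\eta(u^+)-\nabla\eta(u_R)](u^+-u)+a\eta(u|u_L)-\eta(u^+|u_R)$, and expanding the relative entropies in $\eta(u|u^+)=\eta(u|u_R)-a\eta(u|u_L)$ (which is \eqref{eq:u-plus-dist}) gives $a\eta(u|u_L)-\eta(u^+|u_R)=[\nabla\eta(u_R)-\nabla\eta(u^+)](u^+-u)$, so this coefficient vanishes. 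Your derivation of \eqref{lemma-3} --- \eqref{eq:rh-der} plus the spectral gap for $j\ne i$, and \eqref{eq:eta-der} plus $r_i^t\nabla^2\eta\parallel l_i$ for $j=i$ --- also checks out, granted boundedness of $\nabla u^+(u_L)$ and $\nabla\sigma_\pm$.

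The one place to be careful is \eqref{eq:u-plus-lip}. In your implicit-differentiation formula the denominator $\partial_s\eta(u|S_u^i(s))|_{s=s^*}\approx s^*\,r_i^t\nabla^2\eta\,r_i$ degenerates as $u$ approaches $\partial\Pi$ (where $s^*(u)\to 0$), while the numerator contains $\nabla\tilde\eta(u)$, which remains of size $O(s)$ there; the quotient is therefore of order $s/s^*(u)$ and a Lipschitz constant uniform over all of $\Pi$ is not actually available. Already for Burgers with $\eta=u^2/2$, $u_L=0$, $u_R=-s$, $a=1$, one computes $u^+(u)=u-\sqrt{2su+s^2}$, whose derivative tends to $-\infty$ at $\partial\Pi=\{u=-s/2\}$. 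Your analysis, which resolves the $0/0$ at $u=u_L$ where $s^*(u_L)=s$, is therefore the correct scope: the bound holds with a constant independent of $s$ on the region where $s^*(u)$ is comparable to $s$, e.g.\ on balls of radius $cs$ about $u_L$, and that is the only place the bound is invoked (the limits at $u_L$ in Section \ref{section:comp}, and the points $u_L+tv$ with $t$ small in the proof of Corollary \ref{cor:converse}). If you want to prove the statement literally as written for all $u\in\Pi$, you would need either to restrict the region or to track the $s/s^*(u)$ dependence explicitly.
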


\begin{proof}
    We establish~\eqref{eq:sj} by differentiating $f'r_i = \lambda_i r_i$ in the direction $v$ yielding
    $$f'':r_i(u) \otimes v = r_i(u) (\nabla \lambda_i(u) v) + (\lambda_i(u)I - f'(u)) (\nabla r_i(u) v) $$
    then left multiplying by $l_i(u)$. 

    The remaining equations contained in the lemma are proven in section 6 of \cite{gkv} and follow identically in our setting.

\end{proof}

To compute the limit contained in section \ref{section:comp} we need the following result on the derivative of $\sigma_{\pm}$, the velocity of a maximal shock $(u,u^+(u), \sigma_\pm)$. 
\begin{lemma}
    \begin{equation} \lim_{s \to 0} \nabla \sigma^i(u,u^+(u))|_{u = u_L} = \frac{1}{2}\nabla \lambda_i(u_L) \left(I + \lim_{s \to 0} \nabla u^+(u_L)\right)\label{lemma-5} \end{equation}
\end{lemma}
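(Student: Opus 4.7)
The plan is to derive the limit directly from the Rankine--Hugoniot differential identity \eqref{eq:rh-der}. Evaluating at $u = u_L$ and left-multiplying by $l_i(u_L)$, I would extract the leading $O(s)$ coefficient as $s \to 0$. Because $l_i(u_L)[f'(u_L) - \lambda_i(u_L)I] = 0$, the $O(1)$ parts of both sides cancel and the surviving $O(s)$ terms, after division by $s$, yield a linear equation for $\lim_{s\to 0}\nabla \sigma$.

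The ingredients are the standard Lax expansions
\begin{align*}
  u^+(u_L) - u_L &= s\, r_i(u_L) + O(s^2),\\
  \sigma - \lambda_i(u_L) &= \tfrac{s}{2}\bigl(\nabla \lambda_i(u_L) \cdot r_i(u_L)\bigr) + O(s^2),\\
  f'(u^+(u_L)) - f'(u_L) &= s\, f''(u_L) r_i(u_L) + O(s^2),
\end{align*}
identity \eqref{eq:sj}, which in the normalization $l_i r_i = 1$ gives $l_i(u_L) f''(u_L) r_i(u_L) = \nabla \lambda_i(u_L)$, the uniform bound $|\nabla u^+| \leq C$ from \eqref{eq:u-plus-lip}, and the vanishing limit $l_i(u_L) \nabla u^+(u_L) \to 0$, which follows from \eqref{lemma-3} applied to each basis vector $r_k(u_L)$. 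Inserting these into \eqref{eq:rh-der} projected onto $l_i(u_L)$ and dividing by $s$ produces
$$ (1 + O(s))\nabla \sigma = \nabla \lambda_i(u_L)\, \nabla u^+(u_L) + \tfrac{1}{2}\bigl(\nabla \lambda_i(u_L) \cdot r_i(u_L)\bigr)\, l_i(u_L) + o(1). $$
Since the right-hand side is bounded uniformly in $s$, $\nabla \sigma$ is itself $O(1)$, and letting $s \to 0$ with $B := \lim_{s\to 0} \nabla u^+(u_L)$ yields
$$ \lim_{s \to 0}\nabla \sigma = \nabla \lambda_i(u_L)\, B + \tfrac{1}{2}\bigl(\nabla \lambda_i(u_L) \cdot r_i(u_L)\bigr)\, l_i(u_L). $$

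A short algebraic cleanup then closes the argument. Identity \eqref{lemma-3} forces $B r_i(u_L) = 0$ and $B r_k(u_L) = r_k(u_L)$ for $k \neq i$, so $B = I - r_i(u_L) \otimes l_i(u_L)$; consequently $\nabla \lambda_i(u_L)\, B = \nabla \lambda_i(u_L) - (\nabla \lambda_i \cdot r_i)(u_L)\, l_i(u_L)$, and both sides of the last display reduce to $\nabla \lambda_i(u_L) - \tfrac{1}{2}(\nabla \lambda_i \cdot r_i)(u_L)\, l_i(u_L) = \tfrac{1}{2}\nabla \lambda_i(u_L)(I + B)$, which is the claim.

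I expect the only real subtlety to be the bookkeeping of remainder terms after right-multiplication by $\nabla u^+$: the Lipschitz bound \eqref{eq:u-plus-lip} is needed so that the $O(s^2)$ errors multiplied by $\nabla u^+$ remain $O(s^2)$, and the limit $l_i(u_L)\nabla u^+(u_L) \to 0$ from \eqref{lemma-3} is essential to collapse the cross-term $(\lambda_i - \sigma)\,l_i \nabla u^+ = O(s) \cdot o(1)$ into $o(s)$. With these in place the projected equation self-regularizes, and the passage to the limit is routine.
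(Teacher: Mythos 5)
Your argument is correct, but it takes a genuinely different route from the paper. The paper writes the Rankine--Hugoniot condition as an exact eigenvalue problem for the averaged matrix $A(u,u^+)=\int_0^1 f'(tu+(1-t)u^+)\,dt$, differentiates $A r=\sigma r$, and left-multiplies by the left eigenvector of $A$ to obtain the exact perturbation formula $\nabla\sigma = l\,\nabla A\, r$; the factor $\tfrac12(I+\nabla u^+)$ then falls out of $\int_0^1[tI+(1-t)\nabla u^+]\,dt$ after applying \eqref{eq:sj}, with all remainders controlled by $|l-l_i|+|r-r_i|=O(|u-u^+|)$. You instead project the already-differentiated identity \eqref{eq:rh-der} onto $l_i(u_L)$ and match leading orders in $s$, which requires two extra inputs the paper's proof never touches: the Lax speed expansion $\sigma-\lambda_i=\tfrac{s}{2}(\nabla\lambda_i\cdot r_i)+O(s^2)$ and the explicit limit $B=\lim_{s\to 0}\nabla u^+(u_L)=I-r_i\otimes l_i$ extracted from \eqref{lemma-3}. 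I checked your final reconciliation: with $\nabla\lambda_i B=\nabla\lambda_i-(\nabla\lambda_i r_i)\,l_i$ both your expression and $\tfrac12\nabla\lambda_i(I+B)$ equal $\nabla\lambda_i-\tfrac12(\nabla\lambda_i r_i)\,l_i$, so the identity closes. The trade-off is that the paper's route gives an exact formula for $\nabla\sigma$ at every $s$ with essentially no bookkeeping of the cross-terms, while yours avoids introducing the averaged matrix and its eigenvectors at the cost of the remainder analysis you flag (in particular the collapse of $(\lambda_i-\sigma)\,l_i\nabla u^+=O(s)\cdot o(1)$ into $o(s)$, which you handle correctly). One small caveat: your expansion of $\sigma-\lambda_i$ must use the same orientation of the shock curve as $u^+(u_L)-u_L=s\,r_i+O(s^2)$; since the former is a consequence of the latter, the two are automatically consistent and the final answer is independent of the sign convention, but it is worth stating.
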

\begin{proof}
    We recall the averaged matrices 
    $$A(u,v) = \int_0^1 f'(tv + (1-t)u)\,dt.$$
    These matrices have the property that any states $(u_-,u_+,\sigma)$ satisfying the Rankine-Hugoniot condition correspond to eigenvectors of $A$:
    $$ A(u_-,u_+)r = \sigma r\,\quad \text{where } r = \frac{u_+ - u_-}{|u_+ - u_-|},$$
    where the shock velocity $\sigma$ is an eigenvalue of $A$. 
    In the case that $u_- = u,u_+ = u^+(u)$ we find $\sigma = \sigma_i(u,u^+(u))$ and the above equation differentiates to give us 
    $$ r\otimes \nabla [\sigma_i(u,u^+(u))] = \nabla [A(u,u^+(u))] r + (A(u,u^+(u)) - \sigma_i(u,u^+(u)) I) \nabla r$$
    for states $u \in \Pi$.
    Left multiplying by $l$, the left eigenvector of $A(u,u^+(u))$ with the same eigenvalue $\sigma_i$ and with normalization $lr = 1$, the above becomes
    $$ \nabla \sigma_i(u,u^+(u)) = l \nabla A(u,u^+(u)) r.$$
    Since $f\in C^4(\mathcal{V})$ we can use the Leibniz rule to compute $\nabla A$ and we find
    \begin{align*}
        \nabla \sigma_i(u,u^+(u)) &= l(u,u^+(u)) \int_0^1 \nabla f'(tu + (1-t)u^+(u))\,dt\,r(u,u^+(u)) \\
        &= \int_0^1 l(u,u^+(u)) f''(tu + (1-t)u^+(u)) :r(u,u^+(u)) \\
        &\quad\quad\quad\quad\quad\quad\quad\quad\quad\quad\quad\quad\otimes [tI + (1-t) \nabla u^+(u)]\,dt \\
        &= \int_0^1 l_i(u) f''(u) :r_i(u) \otimes [tI + (1-t) \nabla u^+(u)]\,dt + O(|u-u^+(u)|)
    \end{align*}
    where we have used that $f''$ is Lipschitz and $|r(u,v) - r_i(u)| + |l(u,v) - l_i(u)| \leq C|u-v|$. 
    Applying equation~\eqref{eq:sj} the integral evaluates to give us
    \begin{align*}
        \nabla \sigma_i(u,u^+(u)) &= \frac{1}{2} \nabla \lambda_i(u)[I + \nabla u^+(u)] + O(|u-u^+(u)|).
    \end{align*}
    From here we can evaluate the above at $u = u_L$ and compute the limit as $s \to 0$ using the fact that $|u-u^+| \to 0$ as $s \to 0$ due to $\lim_{s\to 0}\tilde \eta(u) = 0$, and equations~\eqref{eq:u-plus-dist},~\eqref{lemma-1} to arrive at equation~\eqref{lemma-5}. 

   \end{proof}

Finally, we note that for our purposes we can remove the assumption that $\eta(u|S_u^i(t))$ is increasing for all $t$, as we consider just small perturbations of small shocks.
\begin{lemma} \label{lem:remove-assum-j}
    There exists $R, \overline s,s_0 > 0$ such that for all $u \in B_R(u_L) \cap \Pi$ and $s < s_0$ there exists a unique $s^*(u) < \overline s$. 
\end{lemma}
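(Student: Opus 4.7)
The plan is to note that, by \eqref{eq:u-plus-dist}, the shock parameter $s^*(u)$ is characterized as the unique root in $(0,\overline s)$ of
\[
    \phi_u(s) := \eta(u\,|\,S_u^i(s)) = -\tilde \eta(u),
\]
so it suffices to prove that $\phi_u$ is strictly increasing on a uniform interval $[0,\overline s]$ and that the value $-\tilde \eta(u)$ lies strictly between $\phi_u(0) = 0$ and $\phi_u(\overline s)$ for every $u \in B_R(u_L) \cap \Pi$. Existence then follows from the intermediate value theorem and uniqueness from the strict monotonicity.

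First I would differentiate to obtain
\[
    \phi_u'(s) = -\nabla^2 \eta(S_u^i(s))\,\dot S_u^i(s)\cdot (u - S_u^i(s)),
\]
and, using $S_u^i(0) = u$ and $\dot S_u^i(0) = r_i(u)$, Taylor expand around $s=0$ to get
\[
    \phi_u'(s) = s\,\nabla^2\eta(u)\,r_i(u)\cdot r_i(u) + O(s^2).
\]
The leading coefficient is strictly positive by convexity of $\eta$, and on a small closed neighborhood of $u_L$ it is bounded below by some $\alpha > 0$. Choosing $\overline s$ small enough for the $O(s^2)$ error to be absorbed then yields $\phi_u'(s) \geq \tfrac{\alpha}{2} s$ for all $(u,s) \in \overline{B_R(u_L)} \times [0,\overline s]$, hence strict monotonicity on $[0,\overline s]$ together with the quantitative lower bound $\phi_u(\overline s) \geq \tfrac{\alpha}{4}\overline s^2$.

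For existence I would use the relative entropy comparison \eqref{lemma-1} and $|u_L - u_R| \leq C s$ to estimate
\[
    -\tilde \eta(u) \leq \eta(u|u_R) \leq C'|u - u_R|^2 \leq C''(R^2 + s_0^2)
\]
uniformly over $u \in B_R(u_L)$ and $s < s_0$. With $\overline s$ and $\alpha$ already fixed, I would then shrink $R$ and $s_0$ so that $C''(R^2 + s_0^2) < \tfrac{\alpha}{4}\overline s^2 \leq \phi_u(\overline s)$. Since $\phi_u$ is continuous with $\phi_u(0) = 0 < -\tilde \eta(u) < \phi_u(\overline s)$ for $u \in \Pi \cap B_R(u_L)$, the intermediate value theorem gives the existence of $s^*(u) \in (0,\overline s)$, and strict monotonicity makes it unique.

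The main obstacle is essentially bookkeeping: the constants must be chosen in the order $\overline s, \alpha, R, s_0$, with $R$ and $s_0$ shrunk only after $\overline s$ and $\alpha$ have been fixed from the intrinsic monotonicity estimate; otherwise the comparison between $\phi_u(\overline s)$ and $-\tilde \eta(u)$ becomes circular. All regularity needed for the argument is supplied by $f \in C^4(\mathcal V)$ and the strict convexity of $\eta$, both assumed in the introduction.
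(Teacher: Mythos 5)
Your proof is correct, and the uniqueness half (differentiating $s\mapsto\eta(u|S_u^i(s))$, extracting the leading term $s\,\nabla^2\eta(u):r_i(u)\otimes r_i(u)$, and choosing $\overline s$ uniformly over a neighborhood of $u_L$) is exactly the paper's argument. The existence half differs in one place: how $-\tilde\eta(u)$ is bounded above. You use the crude bound $-\tilde\eta(u)\le\eta(u|u_R)\le C''(R^2+s_0^2)$, which forces you to shrink the radius $R$ as well as $s_0$ to get under $\phi_u(\overline s)\ge \tfrac{\alpha}{4}\overline s^2$. The paper instead exploits convexity of $\tilde\eta$: taking $\overline u\in\partial\Pi$ nearest to $u$, it writes $-\tilde\eta(u)\le|\nabla\tilde\eta(\overline u)|\,d(u,\partial\Pi)$ and then estimates $|\nabla\tilde\eta(\overline u)|\le K[2R(a(s)-1)+s]$, which tends to zero linearly in $s$ \emph{uniformly on a fixed ball} $B_R(u_L)$ with $R$ chosen once and for all (any $R$ with $B_{2R}(u_L)\subset\joinrel\subset\mathcal V$). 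So the paper only shrinks $s_0$, not $R$. Both arguments prove the lemma as stated, since the statement permits $R$ to be chosen small; your ordering of constants ($\overline s,\alpha$ first from the monotonicity estimate on an initial ball, then $R$ and $s_0$) correctly avoids circularity, and the small fixed $R$ you end up with is still adequate for the later uses of $u^+$ near $u_L$. What the paper's finer estimate buys is the extra information that $\sup_{B_R(u_L)\cap\Pi}(-\tilde\eta)=O(s)$ on a ball of fixed size, which is a cleaner statement but is not needed for the lemma itself.
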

\begin{proof}
    We fix any $ R> 0 $ such that $B_{2R}(u_L) \subset\joinrel\subset \mathcal{V}$. 
    Computing the derivative of $s\mapsto \eta(u|S_u^1(s))$ we find 
    \begin{align*}
        \frac{d}{ds}\eta(u|S_u^i(s)) =& -\nabla^2 \eta(S_u^i(s)) : (u - S_u^i(s)) \otimes (\partial_t S_u^i(s)) \\
        =& s\nabla^2 \eta(u) : r_i(u)\otimes r_i(u) + O(s^2)
    \end{align*}
    and by continuity it follows there exists $\overline s > 0$ such that, for all $s \in (0,\overline s)$ and $u\in B_r(u_L)\cap \Pi$ we have $(d/ds) \eta(u|S_u^1(s)) > 0$.
    Since $s^*$ is defined by the implicit equation 
    $$ \eta(u|S_u^1(s^*)) = - \tilde \eta(u) $$
    this suffices to show uniqueness of any solution $s^* < \overline s$. 

    To show existence we note at $s = 0$ we have 
    $$ 0 = \eta(u|S_u^1(0)) < -\eta(u) $$
    for all $u \in B_r(u_L) \cap \Pi$. 
    At $s = \overline s$ we first show a bound on $-\tilde \eta(u)$. 
    Letting $\overline u \in \partial\Pi$ be the minimizer of $\overline u \mapsto |u-\overline u|$ we have the bound
    $$ -\tilde \eta(u) \leq -\tilde \eta(\overline u) -\nabla \tilde \eta(\overline u)(u - \overline u) = |\nabla \tilde \eta(\overline u)| d(u, \partial \Pi)$$
    by convexity of $\tilde \eta(u)$. 
    Furthermore, for sufficiently small $s_0$ we have $B_R(u_L) \cap \partial \Pi \ne \varnothing$, hence $|u-\overline u| < R$ and by the triangle inequality $\overline u \in B_{2R}(u_L).$
    From this we observe
    $$ |\nabla \tilde \eta(\overline u)| = |(a(s) - 1)(\nabla \eta(u) - \nabla \eta(u_L)) +(\nabla \eta(u_R) - \nabla \eta(u_L))| \leq K[2R(a(s) - 1) +s]$$
    where $K > 0$ is the Lipschitz constant of $\nabla \eta$ on $B_{2R}(u_L)$. 
    The right hand side converges to zero linearly as $s \to 0$, so we further restrict ourselves to $s_0$ sufficiently small to satisfy 
    $$ K[2R(a(s) - 1) +s] \leq Cs_0 < c \overline s^2 \leq \eta(u|S_u^1(\overline s))$$
    where $c$ is the constant from Lemma \ref{lemma-1} with $V = \overline{B_{2R}(u_L)}$.
    This suffices to show that 
    $$ - \tilde \eta(u) \leq \eta(u|S_u^1(\overline s)) $$
    and the existence of $s^*$ now follows by the intermediate value theorem.  
\end{proof}

\section{Computing the limit of $\nabla^2 D_{max}(u_L)/s$} \label{section:comp}
In this section we take $u_L \in \mathcal{V}$ to be fixed, $u_R = S_{u_L}^i(s)$ for any $s \in (0,s_0)$, and weight $a = a(s)$ such that $a \in C^1([0,s_0))$ and $a(0) = 1$. 
Throughout this section we assume $s_0$ is sufficiently small to satisfy Lemma \ref{lem:remove-assum-j} to guarantee existence and uniqueness of $u^+(u)$.  
From equation~\eqref{eq:nabla2-Dmax} the hessian of $D_{max}$ at $u_L$ is
\begin{equation} \nabla^2 D_{max}(u_L)= \left[(\nabla u^+(u_L))^t\nabla^2 \eta(u_R)  - a(s) \nabla^2\eta(u_L)\right](f'(u_L) - \sigma_{LR}I). \label{eq:Dmaxhess} \end{equation}
We note that in the case where the $i^\text{th}$ family is linearly degenerate we could equivalently take $u^+(u) = u$ (or any other value on $R_u^i$) as $D_{RH}(u,R_u^i(s))$ is constant in $s$ by equation~\eqref{eq:DRH-Dcont}. 
We elect to keep $u^+$ as defined in~\eqref{eq:u-plus-dist} to compute the hessian simultaneously for both linearly degenerate and genuinely nonlinear families. 
Rewriting (\ref{eq:rh-der}) with $u = u_L$ we have
\begin{equation}\label{eq:u+val}
\begin{aligned}
    \nabla u^+(u_L)  = I + [f'(u_L) - \sigma_{LR} I]&^{-1} [ (u_R - u_L)\otimes \nabla \sigma_\pm   \\
    &+  (f'(u_L) - f'(u_R)) \nabla u^+ ]
\end{aligned}
\end{equation}
where we note $f'(u_L) - \sigma_{LR} I$ is invertible when the $i^\text{th}$ family is genuinely nonlinear for sufficiently small $s$ due to shock satisfying the Lax condition, $$\lambda_i(u_R) < \sigma_{LR} < \lambda_i(u_L),$$ and our system being strictly hyperbolic. 
In the case of the $i^\text{th}$ family being linearly degenerate we restrict ourselves to computing the hessian on $\text{span}(\{r_j(u_L)\}_{j \ne i})$ where the operator is invertible. 
Substituting~\eqref{eq:u+val} into~\eqref{eq:Dmaxhess} we arrive at the formula
\begin{align*}
    \nabla^2 D_{max}(u_L) =& \left[(\nabla u^+)^t\nabla^2 \eta(u_L)  - a(s) \nabla^2\eta(u_L)\right](f'(u_L) - \sigma_{LR}I) \\
    &+ (\nabla u^+)^t [\nabla^2 \eta(u_R) - \nabla^2 \eta(u_L)](f'(u_L) - \sigma_{LR}I) \\
    =& \left[\nabla^2 \eta(u_L)  - a(s) \nabla^2\eta(u_L)\right](f'(u_L) - \sigma_{LR}I) \\
    &+ (\nabla u^+)^t(\nabla^2\eta(u_R) - \nabla^2\eta(u_L))(f'(u_L) - \sigma_{LR}I) \\
    &+  \left[ (u_R - u_L)\otimes \nabla \sigma_\pm +  [f'(u_L) - f'(u_R)] \nabla u^+ \right]^t\nabla^2 \eta(u_L) 
\end{align*}
where we used symmetry of $\nabla^2 \eta(u) f'(u)$ on the last line. 

Letting $j,k = 1,\dots, n$ we wish to take the limit as $s\to 0$ of the expressions 
\begin{align*}
    \frac{1}{s}r_j(u_L)^t&\nabla^2 D_{max}(u_L) r_k(u_L) \\
    =& \frac{1}{s}(\lambda_k(u_L) - \sigma_{LR})r_j(u_L)^t\left[1 - a(s) \right]\nabla^2 \eta(u_L) r_k(u_L) \\
    &+ \frac{1}{s}(\lambda_k(u_L) - \sigma_{LR})r_j(u_L)(\nabla u^+)^t(\nabla^2\eta(u_R) - \nabla^2\eta(u_L))r_k(u_L) \\
    &+  \frac{1}{s}r_k(u_L)^t \nabla^2 \eta(u_L) [ (u_R - u_L)\otimes \nabla \sigma_\pm +  [f'(u_L) - f'(u_R)] \nabla u^+ ]r_j(u_L) \\
    =:& D_{jk} + E_{jk} + F_{jk}
\end{align*}

\textbf{Case 1: $j,k \ne i$.}
For both $D_{jk}, E_{jk}$ we have one term that is $O(s)$ while the rest are $O(1)$ allowing us to compute our limit by simply differentiating
\begin{align*}
    \lim_{s\to 0} D_{jk} =& \lim_{s\to 0}\frac{1}{s}(\lambda_k(u_L) - \sigma_{LR})r_j(u_L)^t\left[1 - a(s) \right]\nabla^2 \eta(u_L) r_k(u_L) \\
    =& -(\lambda_k(u_L) - \lambda_i(u_L)) a'(0) r_j(u_L)^t \nabla^2\eta(u_L) r_k(u_L) \\
    =& - a'(0) \nabla^2\eta(u_L)(f'(u_L) - \lambda_i(u_L)I):r_j(u_L)\otimes r_k(u_L) \\
    \lim_{s\to 0} E_{jk} =& \lim_{s\to 0} \frac{1}{s}(\lambda_k(u_L) - \sigma_{LR})r_j(u_L)^t(\nabla u^+)^t(\nabla^2\eta(u_R) - \nabla^2\eta(u_L))r_k(u_L)\\
    =& -(\lambda_k(u_L) - \lambda_i(u_L)) \nabla^3 \eta(u_L): r_k(u_L)\otimes \lim_{s\to 0} \nabla u^+(u_L)r_j(u_L) \otimes r_i(u_L)\\
    =&-(\lambda_k(u_L) - \lambda_i(u_L)) \nabla^3 \eta(u_L): r_k(u_L)\otimes r_j(u_L) \otimes r_i(u_L)
\end{align*}
where the final equality is due to~\eqref{lemma-3}. 
To proceed we find an identity which combines $E_{jk}, F_{jk}$ into our desired quantity. 
Differentiating the identity $0 = \nabla^2 \eta(u):r_i(u) \otimes r_k(u)$ in the direction $r_j(u)$ we find 
\begin{align*}
    \nabla^3 \eta(u):r_i(u) \otimes r_j(u) \otimes r_k(u) =& -r_i(u)^t \nabla^2\eta(u)(\nabla r_k(u) r_j(u)) \\
    &-r_k(u)^t \nabla^2\eta(u)(\nabla r_i(u) r_j(u)).
\end{align*}
Substituting equation~\eqref{eq:sj} into the above expression and recalling that $l_i(u) \parallel r_i(u)^t \nabla^2 \eta(u)$ we find 
\begin{align*}
    \lim_{s\to 0} E_{jk}=& r_i(u_L)^t \nabla^2 \eta(u_L) f''(u_L):r_k(u_L)\otimes r_j(u_L) \\
    &- r_k(u_L)^t \nabla^2 \eta(u_L) f''(u_L):r_i(u_L)\otimes r_j(u_L).
\end{align*}
Finally, for $F_{jk}$ we find
\begin{align*}
    \lim_{s\to 0} F_{jk} =& \lim_{s\to 0} \frac{1}{s}r_k(u_L)^t \nabla^2 \eta(u_L) [ (u_R - u_L)\otimes \nabla \sigma_\pm +  [f'(u_L) - f'(u_R)] \nabla u^+ ]r_j(u_L) \\
    =& -[r_k(u_L)^t \nabla^2 \eta(u_L):r_i(u_L)] \nabla \sigma_{\pm} r_j(u_L) \\
    &+ r_k(u_L)^t \nabla^2 \eta(u_L) f''(u_L):r_i(u_L)\otimes r_j(u_L) \\
    =& r_k(u_L)^t \nabla^2 \eta(u_L) f''(u_L):r_i(u_L)\otimes r_j(u_L)
\end{align*}
where the first term is zero because $r_k(u_L)^t \parallel l_k(u_L)$. 
Hence we have 
$$ \frac{1}{s}r_j^t\nabla^2 D_{max} r_k \to \left[- a'(0) \nabla^2\eta (f' - \lambda_iI) + r_i^t \nabla^2 \eta f'' \right] :r_k\otimes r_j.$$
where all functions are evaluated at $u_L$. 

\textbf{Case 2: $j = i$.}
Due to symmetry of $\nabla^2 D_{max}(u_L)$ it suffices to just consider $j = i$. 
For $E$ and $D$ we find
$$ \lim_{s \to 0} D_{ik} = \lim_{s \to 0} E_{ik} = 0$$
due to $\sigma_{LR} \to \lambda_i(u_L)$ and $|\nabla u^+(u_L)| \lesssim 1$. 
For $F_{ik}$ we can directly compute
\begin{align*}
    \lim_{s\to 0} F_{ik} =& \lim_{s\to 0} \frac{1}{s}r_k(u_L)^t \nabla^2 \eta(u_L) \left[ (u_R - u_L)\otimes \nabla \sigma_\pm +  [f'(u_L) - f'(u_R)] \nabla u^+ \right]r_i(u_L) \\
    =& \lim_{s\to 0} r_k(u_L)^t \nabla^2 \eta(u_L) \frac{1}{s}(u_R - u_L) (\nabla \sigma_\pm r_i(u_L)) \\
    &+  r_k(u_L)^t \nabla^2 \eta(u_L) \frac{1}{s}[f'(u_L) - f'(u_R)] (\nabla u^+r_i(u_L))\\
    =& -r_k(u_L)^t \nabla^2 \eta(u_L) r_i(u_L) (\lim_{s\to 0} \nabla \sigma_\pm r_i(u_L)) \\
    &+  r_k(u_L)^t \nabla^2 \eta(u_L) f''(u_L): (\lim_{s\to 0} \nabla u^+r_i(u_L))\otimes r_i(u_L)\\
    =& \begin{cases} 0&k\ne i \\ -\frac{1}{2}\nabla \lambda_i(u_L)r_i(u_L)\left[r_i(u_L)^t \nabla^2 \eta(u_L) r_i(u_L)\right] & k=i\end{cases}
\end{align*}
due to equations~\eqref{eq:sj} and~\eqref{lemma-3}.

\section{Proof of main theorems}
\begin{proof}[Proof of Theorem \ref{thm:nec}]
    By Theorem \ref{thm:kv} the $a$-contraction property implies our shock $(u_L,u_R,\sigma_{LR})$ satisfies the $(\mathcal{H}2)$ property of $a$-relative entropy stable: $$D_{RH}(u_\pm,\sigma_\pm) \leq 0$$ for all entropic shocks $(u_-,u_+,\sigma_\pm)$ with $\tilde \eta(u_-) < 0 <  \tilde \eta(u_+)$. 
    From equation~\eqref{eq:u-plus-dist} $u^+(u_L) = u_R$ hence $D_{max}(u_L) = 0$.
    Furthermore, by continuity of $u^+$ and $\tilde \eta(u_L) < 0 < \tilde \eta(u_R)$ we find for $u$ sufficiently close to $u_L$ that $\tilde \eta(u) < 0 <  \tilde \eta(u^+(u))$ as well.
    Hence $D_{max} \leq 0$ on a neighborhood of $u_L$ and it follows that $D_{max}$ must have a local maximum at $u = u_L$ for all $0<s < s_0$.
    Therefore, $\nabla^2 D_{max}(u_L)$ is negative semidefinite for $s \in (0,s_0)$.
    Following the computation in section \ref{section:comp} reveals
    \begin{align*} 
        \frac{1}{s} \nabla^2 D_{max}(u_L):v\otimes v \to& [-a'(0)\nabla^2\eta(U_L)(f'(U_L)- \lambda_i(U_L)I)  \\
        &+ r_i(U_L)^t\nabla^2\eta(U_L)f''(U_L)]:v\otimes v 
    \end{align*}
    as $s \to 0$ for all $v \in V = \text{span}\{r_k(u_L)\}_{k\ne i}.$
    It follows that the limiting matrix is negative semidefinite on $V$.
\end{proof}

Before proving the sufficient condition we first establish that shocks $(u_-,u_+,\sigma_{\pm})$ sufficiently close to the fixed discontinuity $(u_L,u_R,\sigma_{LR})$ of the $i^\text{th}$ family are necessarily also of the $i^\text{th}$ family, as long as $|u_R- u_L|$ is sufficiently small. 
\begin{lemma} \label{lemma:sep}
    For any $u_L \in \mathcal{V}$, $i \in \{1,\dots,n\}$ there exists $s_0 > 0$, such that for all $u_R = S_{u_L}^i(s), 0<s<s_0$ there exists $\epsilon > 0$ such that any Rankine Hugoniot discontinuity $(u_-,u_+,\sigma_{\pm})$ with $u_-\in B_\epsilon(u_L),u_+\in B_\epsilon(u_R)$ must be of the of the $i^\text{th}$ family. 
\end{lemma}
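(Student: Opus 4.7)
The plan is to exploit the averaged–matrix formulation of the Rankine--Hugoniot condition together with strict hyperbolicity at $u_L$. Recall that a discontinuity $(u_-, u_+, \sigma_\pm)$ satisfies the RH condition $f(u_+) - f(u_-) = \sigma_\pm(u_+ - u_-)$ if and only if $A(u_-, u_+)(u_+ - u_-) = \sigma_\pm (u_+ - u_-)$, where $A(u,v) = \int_0^1 f'(tv + (1-t)u)\,dt$ is the averaged matrix already used in Lemma \ref{lemma-5}. Thus $u_+ - u_-$ must be an eigenvector of $A(u_-, u_+)$ with eigenvalue $\sigma_\pm$, and $A(u_L, u_L) = f'(u_L)$, which by strict hyperbolicity has $n$ distinct eigenvalues $\lambda_1(u_L) < \cdots < \lambda_n(u_L)$ and eigenvectors $r_j(u_L)$ forming a basis.

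First I would fix $s_0 > 0$ small enough so that the shock–curve expansion gives $u_R - u_L = s\,r_i(u_L) + O(s^2)$ and so that a fixed neighborhood of $u_L$ remains inside $\mathcal{V}$. Standard perturbation theory for simple eigenvalues then produces a neighborhood $U \subset \mathcal{V}\times \mathcal{V}$ of $(u_L, u_L)$ on which $A(u,v)$ has $n$ smooth, real, distinct eigenvalues $\tilde\lambda_j(u,v)$ close to $\lambda_j(u_L)$ with eigenvectors $\tilde r_j(u,v)$ close to $r_j(u_L)$; the spectral gap $\min_{j\ne k}|\tilde\lambda_j - \tilde\lambda_k|$ is bounded below by some $\gamma > 0$ depending only on $u_L$.

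Next I would fix $s \in (0, s_0)$ and then choose $\epsilon$ much smaller than $s$ (e.g.\ $\epsilon \le s/C$ for a suitable $C$ depending on $f$ and $u_L$). For any $u_- \in B_\epsilon(u_L)$ and $u_+ \in B_\epsilon(u_R)$ we have $(u_-, u_+) \in U$ and
\[ u_+ - u_- = (u_R - u_L) + O(\epsilon) = s\, r_i(u_L) + O(s^2) + O(\epsilon), \]
so the normalized direction $(u_+ - u_-)/|u_+ - u_-|$ differs from $r_i(u_L)/|r_i(u_L)|$ by $O(s) + O(\epsilon/s)$. Because $u_+ - u_-$ is an eigenvector of $A(u_-,u_+)$ and the basis $\{\tilde r_j(u_-, u_+)\}$ is close to $\{r_j(u_L)\}$ with well–separated directions, the only index $j$ whose eigenvector direction can match that of $u_+ - u_-$ is $j = i$. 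Hence $u_+ - u_- \parallel \tilde r_i(u_-, u_+)$ and $\sigma_\pm = \tilde\lambda_i(u_-, u_+)$, so the discontinuity is of the $i^\text{th}$ family.

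The main obstacle is quantitative: in order for the direction of $u_+ - u_-$ to unambiguously select the $i^\text{th}$ eigenvector of $A(u_-, u_+)$, the separation between $r_i(u_L)$ and the other $r_k(u_L)$ must dominate the error from replacing $(u_L, u_R)$ by $(u_-, u_+)$. This forces the scale separation $\epsilon \ll s$ rather than merely $\epsilon$ small in absolute terms, which is why the lemma is phrased with $\epsilon = \epsilon(s)$. Once this choice is made, the conclusion is a direct consequence of simple–eigenvalue perturbation theory applied to $A$ and strict hyperbolicity at $u_L$.
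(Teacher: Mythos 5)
Your proof is correct, but it identifies the family by a different mechanism than the paper does. The paper's argument is more elementary: it subtracts the Rankine--Hugoniot relations for $(u_L,u_R,\sigma_{LR})$ and $(u_-,u_+,\sigma_\pm)$, bounds the resulting difference above by $2\norm{f}_{C^{0,1}}\,\epsilon$, and derives a contradiction with the lower bound $\tfrac{\delta}{2}|u_R-u_L|$ that would hold if $\sigma_\pm$ were a $j$-family speed with $j\ne i$, where $\delta$ is the uniform gap between $\lambda_i$ and the other characteristic speeds on $B_{2s_0}(u_L)$. That is, the paper distinguishes families purely through the scalar shock speed and a Lipschitz estimate on $f$, never invoking the averaged matrix. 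You instead distinguish them through the direction of the jump $u_+-u_-$, using that it must be an eigenvector of $A(u_-,u_+)$ and that the perturbed eigenvector directions $\tilde r_j(u_-,u_+)$ remain well separated; this requires simple-eigenvalue perturbation theory for $A$ (legitimate here since $f\in C^4$ and $f'(u_L)$ has simple eigenvalues) but nothing else. Both arguments rest on the same two ingredients --- strict hyperbolicity at $u_L$ and the scale separation $\epsilon \ll s \sim |u_R-u_L|$, which is exactly why $\epsilon$ must depend on $s$ --- and both are complete. Your route has the mild advantage of also pinning down $\sigma_\pm=\tilde\lambda_i(u_-,u_+)$ and $u_+-u_-\parallel \tilde r_i(u_-,u_+)$ explicitly, while the paper's is shorter and avoids spectral perturbation machinery entirely.
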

\begin{proof}
    We choose $s_0$ sufficiently small to guarantee separation of characteristic speeds,
        $$ \delta :=\quad  \inf_{\mathclap{\substack{j\ne i,\\ u,v \in B_{2s_0}(u_L)}}} \quad|\lambda_i(u) - \lambda_j(v)| > 0.$$
    Fix $\epsilon > 0$. 
    Let $(u_-,u_+, \sigma_{\pm})$ be a Rankine Hugoniot discontinuity and suppose $u_-\in B_\epsilon(u_L),u_+\in B_\epsilon(u_R)$.
    We have the Rankine Hugoniot equations 
    \begin{align*}
        f(u_R) - f(u_L) &= \sigma_{LR}(u_R-u_L) \\
        f(u_+) - f(u_-) &= \sigma_{\pm}(u_+-u_-)
    \end{align*}
    and after combining we find, 
    \begin{align*}
        2\norm{f}_{C^{0,1}(B_{2s_0}(u_L))} \epsilon &\geq \norm{f}_{C^{0,1}(B_{2s_0}(u_L))}(|u_R - u_+| + |u_L - u_-|) \\
        &\geq |(f(u_R) - f(u_L)) - (f(u_+) - f(u_-))| \\
        &= |(\sigma_{LR} - \sigma_\pm + \sigma_\pm)(u_R-u_L) - \sigma_{\pm}(u_+-u_-)|\\
        &\geq ||\sigma_{LR} - \sigma_\pm| |u_R-u_L| - |\sigma_{\pm}| |(u_+-u_-) - (u_R - u_L)| |\\
        \shortintertext{Taking $\epsilon > 0$ sufficiently small and $(u_-,u_+,\sigma_\pm)$ to be of the $j\ne i^{\text{th}}$ family, this is bounded below by} 
        &\geq \frac{\delta}{2}|u_R-u_L|, 
    \end{align*}
    which is impossible for $\epsilon$ sufficiently small, forcing $(u_-,u_+,\sigma_\pm)$ to be of the $i^{\text{th}}$ family.
\end{proof}

Selecting $s$ and $\epsilon$ as in Lemma \ref{lemma:sep} we find $\dot h(t)$ is the Rankine-Hugoniot velocity of the $i^\text{th}$ shock $u_\pm$ for almost all time, when $u_-\in B_\epsilon(u_L)$, $u_+\in B_\epsilon(u_R)$. 
\begin{lemma}[Lemma 6 of \cite{leger}] \label{lemma:leger-follow-shock}
    Consider any Lipschitz path $h:[0,\infty) \to \R$ and $u$ an entropic weak solution to~\eqref{eq:cv} verifying the strong trace property. 
    Then for almost all $t > 0$ 
    \begin{align*}
        f(u(t,h(t)+)) - f(u(t,h(t) - )) &= \dot h(t) [u(t,h(t)+) - u(t,h(t) - )] \\
        q(u(t,h(t)+)) - q(u(t,h(t) - )) &\leq \dot h(t) [\eta(u(t,h(t)+)) - \eta(u(t,h(t) - ))] \\
    \end{align*}
\end{lemma}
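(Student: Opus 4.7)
The plan is to derive both identities by testing the distributional formulations of (\ref{eq:cv}) and (\ref{eq:entropy-ineq}) against a family of Lipschitz cutoffs that concentrate on the graph of $h$, and then identify the limits using Definition \ref{def:traces}. Fix $\beta \in C_c^\infty((0,\infty))$ and define the tent $\psi_n(y) := (1 - n|y|)_+$, supported on $[-1/n, 1/n]$ with $\psi_n(0) = 1$ and weak derivative $\psi_n'(y) = n\mathbf{1}_{(-1/n,0)}(y) - n\mathbf{1}_{(0,1/n)}(y)$. Set $\phi_n(t,x) := \beta(t)\psi_n(x-h(t))$. Since $h$ is Lipschitz, $\phi_n$ is a compactly supported Lipschitz function on $(0,\infty)\times \R$, hence admissible in the weak formulations (one may mollify to land in $C_c^\infty$ if desired), with
$$\partial_t \phi_n = \beta'(t)\psi_n(x-h(t)) - \beta(t)\dot h(t)\psi_n'(x-h(t)), \qquad \partial_x \phi_n = \beta(t)\psi_n'(x-h(t)).$$

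I would then substitute $\phi_n$ into the weak form of (\ref{eq:cv}) and perform the change of variables $y = x - h(t)$. The contribution involving $\beta'\psi_n$ is dominated by $C\|u\|_\infty\|\beta'\|_\infty/n$ because $\|\psi_n\|_{L^1_y} = 1/n$, so it vanishes as $n\to\infty$. The remaining terms, using the piecewise-constant form of $\psi_n'$, reduce to
$$\int_0^\infty \beta(t)\bigl(A_n^-(t) - A_n^+(t)\bigr)\,dt, \qquad A_n^\pm(t) := n\!\int_{\mathcal{I}_n^\pm}\!\!\bigl[f(u(t,h(t)+y)) - \dot h(t)u(t,h(t)+y)\bigr] dy,$$
where $\mathcal{I}_n^- = (-1/n, 0)$ and $\mathcal{I}_n^+ = (0, 1/n)$. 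The Lipschitz continuity of $f$ on the compact range of $u$ (so that $|f(u) - f(u_\pm)| \lesssim |u - u_\pm|$), the $L^\infty$ bound on $\dot h$, and Definition \ref{def:traces} together yield $A_n^\pm \to f(u_\pm) - \dot h\, u_\pm$ in $L^1_{\text{loc}}(\R^+)$. Passing to the limit and invoking the fundamental lemma of the calculus of variations for arbitrary $\beta$ yields the Rankine-Hugoniot identity for almost every $t > 0$.

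The entropy statement follows from an identical manipulation applied to (\ref{eq:entropy-ineq}) with $(u,f)$ replaced by $(\eta(u), q(u))$; the only difference is that (\ref{eq:entropy-ineq}) is merely an inequality, so one restricts to $\beta \geq 0$, which preserves the sign through the limit and produces the claimed jump inequality. The main technical step is identifying the $L^1_t$ limits of the sleeve averages $A_n^\pm$ with $f(u_\pm) - \dot h\, u_\pm$: this is precisely what the $\sup$-in-$y$ structure inside the time integral of Definition \ref{def:traces} is designed to provide, converting the one-sided convergence of $u$ to $u_\pm$ into $L^1_t$ convergence of the $y$-averages of the nonlinear composites $f\circ u$ and $q\circ u$.
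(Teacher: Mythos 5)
Your argument is correct and is essentially the standard proof of this lemma: the paper itself does not reprove it but simply cites Lemma 6 of \cite{leger}, where the result is obtained in just this way, by testing the weak formulations of~\eqref{eq:cv} and~\eqref{eq:entropy-ineq} against Lipschitz tent functions $\beta(t)\psi_n(x-h(t))$ concentrating on the curve and identifying the limits of the one-sided sleeve averages via the strong trace property. The technical points you flag (mollifying to admissible test functions, discarding the $\beta'\psi_n$ term at rate $1/n$, Lipschitz continuity of $f$ and $q\circ$, $\dot h\in L^\infty$ by Rademacher, and restricting to $\beta\ge 0$ for the entropy inequality) are exactly the ones that make the argument go through.
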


Equipped with the above lemmas it suffices to consider just the discontinuities of the $i^\text{th}$ family occurring in our perturbation. 
\begin{proof}[Proof of Theorem \ref{thm:suf}]
    By equations~\eqref{eq:Dmax} and~\eqref{eq:nabla-Dmax} $D_{max}(u_L), \nabla D_{max}(u_L) = 0$ for all $s$. 
    Our assumption that $$-C\nabla^2 \eta(U_L)(f'(U_L) - \lambda_i(U_L)I) + r_i(U_L)^t\nabla^2 \eta(U_L)f''(U_L)$$
    is strictly negative definite guarantees there exists $s_0 > 0$ such that $\nabla^2 D_{max}(u_L)$ is strictly negative definite for all $s < s_0$, as
    $$ \lim_{s \to 0 }\frac{1}{s}\nabla D_{max}(u_L):v\otimes v  \text{ is negative definite} $$
    due to the computation in section \ref{section:comp}. 
    From this we can perform a Taylor expansion and find there exists an $\epsilon > 0$ sufficiently small such that
    \begin{align*}
        D_{max}(u) \leq& \nabla^2 D_{max}(u_L):(u-u_L)^{\otimes 2} + K|u-u_L|^3 \\
        \leq& - C|u-u_L|^2
    \end{align*} 
    for all $|u-u_L| < \epsilon$ and some $C > 0$.

    We now fix the neighborhoods of our shock states $U_{L} = B_{u_{L}}(\epsilon)$, $U_{R} = B_{u_{R}}(\epsilon)$. 
    We further take $\epsilon$ and $s_0$ to be sufficiently small to satisfy Lemma \ref{lemma:sep}. 
    Since $u \in \mathcal{S}_{weak}^{\epsilon}$ we have, for almost all $t > 0$, that $(u(t, h(t) -),u(t, h(t) +), \dot h(t))$ is an $i^\text{th}$ shock by Lemma \ref{lemma:sep} and Lemma \ref{lemma:leger-follow-shock}. 
    Hence by equation~\eqref{eq:dist-derivative} we find for almost all $t > 0$ that
    $$ \frac{d}{dt} E(t) \leq D_{RH}(u(t, h(t) -),u(t, h(t) +), \dot h(t)) \leq D_{max}(u(t, h(t) -)) \leq 0$$
    establishing the contraction. 
\end{proof}

\begin{proof}[Proof of corollary \ref{cor:converse}]
    Section \ref{section:comp} and our assumption that 
    $$ -a'(0) \nabla^2 \eta(u_L)(f'(u_L) - \lambda_i(u_L)I) + r_i(u_L)^t \nabla^2 \eta(u_L)f''(u_L) $$
    is not negative semidefinite on the subspace $V = \text{span}(\{r_j(u_L)\}_{j \ne i})$ guarantees the existence of an $s_0 > 0$ such that, for all $0 < s < s_0$, the matrix 
    $$ \nabla^2 D_{max}(u_L)\text{ is not negative semidefinite.} $$
    Hence for each $s < s_0$ there exists a unit vector $v \in V$ such that $\nabla^2 D_{max}(u_L):v\otimes v > 0$.
    By Taylor expanding in phase space around $u = u_L$ we find 
    \begin{align*}
        D_{max}(u) \leq& D_{max}(u_L) + \nabla D_{max}(u_L)(u - u_L) + \frac{1}{2} \nabla^2 D_{max}(u_L):(u - u_L)^{\otimes 2} \\
        &+ \norm{\nabla^3 D_{max}(u)}_{B_r(u_L)} |u-u_L|^3 \\
        =& \frac{1}{2} \nabla^2 D_{max}(u_L):(u - u_L)^{\otimes 2} + \norm{\nabla^3 D_{max}(u)}_{B_r(u_L)} |u-u_L|^3 \\
        \intertext{where again, $r$ is such that $B_r(u_L) \subset\subset\mathcal{V}$. Letting $u = u_L + tv$ for $t < r$ we find}
        =& \frac{1}{2} t^2\nabla^2 D_{max}(u_L):v^{\otimes 2} + t^3\norm{\nabla^3 D_{max}(u)}_{B_r(u_L)}|v|^3 \\
        \leq& \frac{1}{4} t^2\nabla^2 D_{max}(u_L):v^{\otimes 2}
    \end{align*}
    for $t$ sufficiently small. 
    Further, using the Lipshitz estimate in equation~\eqref{eq:u-plus-lip} and recalling that $u^+(u_L) = u_R$ we find
    $$ |u^+(u) - u_R| = |u^+(u) - u^+(u_L)| \leq C|u - u_L|. $$
    We therefore have 
    $$ |u^+(u) - u_R| + |u - u_L| \leq (C+1) |t| $$
    which can be made less than $\delta$ for $|t|$ sufficiently small.
\end{proof}

\section{A family of systems satisfying the hypotheses of Theorem \ref{thm:suf}}
As mentioned in the introduction, in the case of extremal shocks a constant $C\in \R$ satisfying the condition of Theorem \ref{thm:suf} trivially exists due to $f'(u) - \lambda_i(u)I$ being positive/negative definite for these families.
In the case of intermediate families we can construct systems satisfying the above sufficient condition at a point.
Consider the $3\times 3$ systems with state variables $U = (u, v, w) \in \R^3$,
$$ \begin{cases} 
    u_t + [(u+1)^2 + v(2\alpha w - 2u)]_x &= 0 \\
    v_t + [v^2 - u^2 -3w^2 +2\alpha uw]_x &= 0 \\
    w_t + [(w-1)^2 + v(2\alpha u - 6w)]_x &= 0
\end{cases} $$
where $\alpha \in \R$ is constant. 
Computing the jacobian of our flux $f$, we find
$$ f'(U) = \begin{pmatrix}
    2(u+1) - 2v & -2u + 2\alpha w & 2\alpha v \\
    -2u + 2\alpha w & 2v & -6w +2\alpha u \\
    2\alpha v & -6w +2\alpha u & 2(w-1) - 6v
\end{pmatrix} $$
which is symmetric, hence we can take $\eta(U) = \frac{1}{2} |U|^2$ as our entropy. 
Evaluating the matrix at $U_L = (0,0,0)$ we find our eigenvectors are $l_i(U_L)^t = r_i(U_L) = e_i$ with eigenvalues $\lambda_1(U_L) = 2$, $\lambda_2(U_L) = 0$, and $\lambda_3(U_L) = -2$, so additionally we are hyperbolic in a neighborhood of $U_L$.
We will let this neighborhood be our $\mathcal{V}\subset \R^3$.
We compute 
$$ r_2(U_L)^t\nabla^2 \eta(U_L)f''(U_L) = \begin{pmatrix}
    -2& 0 & 2\alpha \\
    0 & 2 & 0\\
    2\alpha & 0 &-6
\end{pmatrix} $$
which we also note indicates  $r_2$ is correctly oriented, as $\nabla \lambda_2(U_L)r_2(U_L) = l_2(U_L)f''(U_L):r_2(U_L)\otimes r_2(U_L) = 2 > 0$ by equation~\eqref{eq:sj}.
From here we observe, given $v = v_1r_1(U_L) + v_3r_3(U_L)$, 
\begin{align}
    [-C \nabla^2\eta(U_L)(f'(U_L)-\lambda_2(U_L)I)& + r_2(U_L)^t\nabla^2 \eta(U_L)f''(U_L)]:v\otimes v \nonumber\\
    &= \begin{pmatrix}
        -2C-2& 0 & 2\alpha \\
        0 & 2 & 0\\
        2\alpha & 0 &2C -6 
    \end{pmatrix}:v\otimes v \nonumber\\
    &= (-2C-2)v_1^2 + (2C - 6)v_3^2 -4\alpha v_1v_3 \nonumber \\
    &\leq (-2C-2 +2 |\alpha|)v_1^2 + (2C - 6+2|\alpha|)v_3^2 \nonumber
\end{align}
where the last line follows by Young's inequality on $v_1v_2$. 
From this we can ensure strict negative definiteness on the subspace $\text{span}(\{r_1(U_L), r_3(U_L)\})$ when $|\alpha| < 4$ and $C$ satisfies $$|\alpha| - 1 < C < 3 - |\alpha| $$
we see the matrix $[-C \nabla^2\eta(f'-\lambda_2I) + r_2^t\nabla^2 \eta f'']$ is negative definite on $\text{span}(\{r_1,r_3\})$ at $U_L$.

\section{Application to MHD}
Consider the following 2D isentropic MHD system.
$$ \begin{cases}
    v_t - u_x &= 0\\
    (vB)_t - \beta w_x &= 0\\
    u_t + (p + \frac{1}{2}B^2) &= 0 \\
    w_t - \beta B_x &= 0
\end{cases}$$
where $v$ is specific volume and the velocity $(u,w)$ and magnetic field $(\beta, B)$ depend on just one direction. 
This property is achieved when the initial data depends on only one direction. 
We further observe that the divergence free condition of MHD gives us that $\beta$ is constant. 
See \cite{zumbrunMHD} for a study of this system and \cite{kangMHD} for non-contraction of intermediate families against large perturbations for the 3D isentropic MHD system. 
We take our pressure to be $$p(v) = v^{-\gamma}$$ for some $\gamma > 1$. Our entropy is 
$$ \eta(U) = \int_v^\infty p(s)\,ds + \frac{1}{2}(u^2 + w^2) + \frac{vB^2}{2}.$$
In \cite{kang16} it was shown that interior shocks of this system cannot satisfy $a$-{}contraction for generic $\mathcal{S}_{weak}$ perturbations, regardless of shock size. 
Here we will show that for a large number of states $U_L$ small shocks of the interior families with left state $U_L$ cannot be a local attractor with respect to $a$-contraction.

We begin by computing the jacobian of our flux and the corresponding eigenvalues/eigenvectors.
Unlike \cite{kang16} we perform our computations in the conservative variables $v, q:=vB, u, w$. 
We find
$$f'(U) = \begin{pmatrix}
    0 & 0 & -1 & 0 \\
    0 & 0 & 0 & -\beta \\
    p'(v) - \frac{q^2}{v^3} & \frac{q}{v^2} & 0 & 0\\
    \beta \frac{q}{v^2} & -\frac{\beta}{v} & 0 & 0
\end{pmatrix}$$ 
which has characteristic equation
$$ \lambda^4 - \left(\frac{q^2}{v^3} + \frac{\beta^2}{v} +p'(v)\right)\lambda^2 + \frac{\beta^2}{v} p'(v) = 0 $$
giving the four eigenvalues
$$ \lambda_1 = -\sqrt{\alpha_+},\quad \lambda_2 = -\sqrt{\alpha_-},\quad \lambda_3 = \sqrt{\alpha_-},\quad \lambda_4 = \sqrt{\alpha_+}, $$
where $$\alpha_\pm = \frac{1}{2} \left[\frac{q^2}{v^3} + \frac{\beta^2}{v} + c^2 \pm \sqrt{\left(\frac{q^2}{v^3} + \frac{\beta^2}{v} + c^2\right)^2 - 4\beta^2 \frac{c^2}{v}}\right],$$
and $c = \sqrt{-p'(v)}$ is the sound speed. 
The corresponding eigenvectors are 
\begin{align*}
    r_1(U) &= \left(1, \frac{q}{v} - \frac{\alpha_+ - c^2}{q}v^2, \sqrt{\alpha_+}, -\beta v \frac{\alpha_+ - c^2}{q\sqrt{\alpha_+}}\right)^t, \\
    r_2(U) &= \left(1, \frac{q}{v} - \frac{\alpha_- - c^2}{q}v^2, \sqrt{\alpha_-}, -\beta v \frac{\alpha_- - c^2}{q\sqrt{\alpha_-}}\right)^t, \\
    r_3(U) &= \left(-1, -\frac{q}{v} + \frac{\alpha_- - c^2}{q}v^2, \sqrt{\alpha_-}, -\beta v \frac{\alpha_- - c^2}{q\sqrt{\alpha_-}}\right)^t, \\ 
    r_4(U) &= \left(-1, -\frac{q}{v} + \frac{\alpha_+ - c^2}{q}v^2, \sqrt{\alpha_+}, -\beta v \frac{\alpha_+ - c^2}{q\sqrt{\alpha_+}}\right)^t
\end{align*}
These vectors are identically those in \cite{kang16} written in the conservative coordinates, hence they satisfy $\nabla \lambda_i r_i > 0$. 
Henceforth we restrict ourselves to states with $q \ne 0$. 
The hessian of our entropy is 
$$ \nabla^2 \eta(U) = \begin{pmatrix}
    -p'(v) + \frac{q^2}{v^3} & -\frac{q}{v^2} & 0 & 0\\
    -\frac{q}{v^2} & \frac{1}{v} & 0 & 0 \\
    0 & 0 & 1 & 0 \\
    0 & 0 & 0 & 1
\end{pmatrix}.$$

For the intermediate family $i = 2$ we compute 
\begin{equation}
\begin{aligned}r_2(U)^t \nabla^2 \eta(U) f''(U) =& 
\sqrt{\alpha_-}\begin{pmatrix}
    p''(v) + 3\frac{q^2}{v^4} & - 2\frac{q}{v^3} & 0 & 0 \\
    - 2\frac{q}{v^3} & \frac{1}{v^2} & 0 & 0 \\
    0 & 0 & 0 & 0\\
    0 & 0 & 0 & 0
\end{pmatrix}  \\
&+ \beta v \frac{|\alpha_- - c^2|}{q\sqrt{\alpha_-}}\begin{pmatrix}
    -2\beta \frac{q}{v^3} & \beta \frac{1}{v^2} & 0 & 0 \\
    \beta \frac{1}{v^2} & 0 & 0 & 0 \\
    0 & 0 & 0 & 0\\
    0 & 0 & 0 & 0
\end{pmatrix} 
\end{aligned}\label{eq:mhd-crit-matrix}
\end{equation}
Let $V = \text{span}(\{r_1(U),r_3(U),r_4(U)\}).$
We observe the vector $$v_1 = r_1(U) + r_4(U) \in V \cap \ker(r_2(U)^t \nabla^2 \eta(U) f''(U)).$$
We further note that 
$$\nabla^2\eta(U)[f'(U)-\lambda_2(U)I]:v_1\otimes v_1 = -\lambda_2(U) \nabla^2 \eta(U):v_1\otimes v_1 > 0.$$ 
Hence for $-C \nabla^2 \eta(U)[f'(U)-\lambda_2(U)I] + r_2(U)^t\nabla^2 \eta(U) f''(U)$ to be negative semidefinite we must require $C \geq 0$.
Likewise, considering $v_2 = r_1(U) + r_3(U) \in V$ we find 
$$r_2(U)^t \nabla^2 \eta(U) f''(U):v_2\otimes v_2 = v^2\sqrt{\alpha_-} \left(\frac{\alpha_- - \alpha_+}{q}\right)^2 > 0,$$
and 
\begin{equation}\label{eq:mhd-r1r3}
\begin{aligned} 
    \nabla^2 \eta(U)[f'(U)-\lambda_2(U)I]:v_2\otimes v_2 =& (\lambda_1 - \lambda_2 )\nabla^2 \eta(U):r_1(U)\otimes r_1(U)\\
    &+ (\lambda_3- \lambda_2 )\nabla^2 \eta(U):r_3(U)\otimes r_3(U).
\end{aligned}
\end{equation}
We next note that 
\begin{equation}\nabla^2 \eta(U):r_3(U)\otimes r_3(U) \leq \nabla^2 \eta(U):r_1(U)\otimes r_1(U)\label{eq:mhd-eta-diff} \end{equation}
     for all states $U \in \mathcal{V}$ with $v$ sufficiently large (depending on $\beta$.)
This can be shown by computing
\begin{equation} 
\begin{aligned}
    \nabla^2 \eta(U):r_1(U)\otimes r_1(U) - \nabla^2 \eta(U):&r_3(U)\otimes r_3(U) \\
    = \alpha_+ - \alpha_- + \frac{v^2}{q^2}\bigg(&v( (\alpha_+-c^2)^2 - (\alpha_--c^2)^2) \\
    &+\beta^2\left(\frac{(\alpha_+ - c^2)^2}{\alpha_+} - \frac{(\alpha_- - c^2)^2}{\alpha_-}\right)\bigg) \\
    = \alpha_+ - \alpha_- + \frac{v^2}{q^2}\bigg(&v (\alpha_+ - \alpha_-)\left(\frac{q^2}{v^3} + \frac{\beta^2}{v} - c^2\right) \\
    &+\beta^2\left(\frac{(\alpha_+ - \alpha_-)(\beta^2c^2/v - c^4)}{\alpha_+\alpha_-}\right)\bigg) \\
\end{aligned}\end{equation}
From here we recall $\alpha_+ \geq \alpha_-$ and $\beta^2/v > c^2$ for sufficiently large $v$ due to $\gamma > 1$, establishing equation~\eqref{eq:mhd-eta-diff} is positive and we thus satisfy the inequality~\eqref{eq:mhd-eta-diff} for states $U$ with sufficiently large $v$. 
Hence the expression~\eqref{eq:mhd-r1r3} is non-positive for states $U$ satisfying
\begin{equation} (\lambda_1(U) - \lambda_2(U)) + (\lambda_3(U) - \lambda_2(U)) = \lambda_1(U) + 3 \lambda_3(U) \leq 0\label{eq:mhd-assum}\end{equation}
and satisfying the inequality~\eqref{eq:mhd-eta-diff}. 
This condition on the characteristics speeds~\eqref{eq:mhd-assum} is equivalent to our state variables satisfying 
$$ 100\frac{\beta^2 c^2}{v} \leq 9\left(\frac{q^2}{v^3} + \frac{\beta^2}{v} + c^2\right). $$
Again, noting that for any $\gamma > 1$ we have $c^2 \to 0$ as $v \to \infty$, we can conclude this inequality is satisfied for all states $U$ with $v$ sufficiently large.
This establishes for such $U$ there exists no $C \in \R$ such that $[-C \nabla^2 \eta(U)[f'(U)-\lambda_2(U)I] + r_2(U)^t \nabla^2 \eta(U)f''(U)]:v\otimes v$ is non-positive for both $v = v_1,v_2$.
By Corollary \ref{cor:converse} we conclude that no sufficiently small shock with left state $U_L$ is a local attractor for the $a$-contraction theory when the associated specific volume $v_L$ is sufficiently large. 
This is, again, in stark contrast with the extremal case where there exists a weight $a$ giving $a$-contraction against large perturbations in the class $\mathcal{S}_{weak}$ for generic left states $U_L \in \mathcal{V}$.

\printbibliography
\end{document}